\newtheorem{thm}{Theorem}[section]
\newtheorem{lemma}[thm]{Lemma}
\newtheorem{prop}[thm]{Proposition}
\theoremstyle{definition}
\newtheorem*{rem}{Remark}
\newtheorem{claim}{Claim}
\theoremstyle{definition}
\newtheorem{defn}[thm]{Definition}
\newcommand{\norm}[1]{{\left\|{#1}\right\|}}
\newcommand{\normoo}[1]{{\left\|{#1}\right\|}_{\infty}}
\newcommand{\set}[1]{{\left\{{#1}\right\}}}
\newcommand{\al}{\ensuremath{\alpha}}
\newcommand{\La}{\ensuremath{\Lambda}}
\newcommand{\la}{\ensuremath{\lambda}}
\newcommand{\sig}{\ensuremath{\sigma}}
\newcommand{\om}{\ensuremath{\omega}}
\newcommand{\Om}{\ensuremath{\Omega}}
\newcommand{\bears}{\begin{eqnarray*}}
\newcommand{\eears}{\end{eqnarray*}}
\newcommand{\ZZ}{\ensuremath{\mathbb{Z}}}
\newcommand{\QQ}{\ensuremath{\mathbb{Q}}}
\newcommand{\RR}{\ensuremath{\mathbb{R}}}
\newcommand{\NN}{\ensuremath{\mathbb{N}}}
\newcommand{\wh}[1]{\ensuremath{\widehat{#1}}}
\newcommand{\Gal}{\ensuremath{\textnormal{Gal}}}
\newcommand{\Php}{\ensuremath{\Phi_p(X^{N/p})}}
\newcommand{\Phq}{\ensuremath{\Phi_q(X^{N/q})}}
\newcommand{\sm}{\ensuremath{\setminus}}
\newcommand{\ssq}{\ensuremath{\subseteq}}
\newcommand{\vn}{\ensuremath{\varnothing}}
\newcommand{\pnq}{\ensuremath{p^nq}}
\newcommand{\pdiv}{\mid\!\mid}
\newcommand{\out}[1]{}
\numberwithin{equation}{section}
\begin{document}

\begin{frontmatter}[classification=text]


\author[RDM]{Romanos Diogenes Malikiosis\thanks{Supported by a Postdoctoral Fellowship from Alexander von Humboldt Foundation.}}
\author[MNK]{Mihail N. Kolountzakis\thanks{Received partial support for this research by grant No 4725 of the University of Crete and Alexander von Humboldt Foundation.}}

\begin{abstract}
We show that
the spectral set conjecture by Fuglede \cite{Fuglede} holds in the setting of
cyclic groups of order $p^nq$, where $p$, $q$ are distinct primes and $n\geq1$.
This means that a subset $E$ of such a group $G$ tiles the group by translation
($G$ can be partitioned into translates of $E$) if and only if
there exists an orthogonal basis of $L^2(E)$ consisting of group characters.
The main ingredient of the present proof is the structure of vanishing sums of
roots of unity of order $N$, where $N$ has at most two prime divisors; the
extension of this proof to the case of cyclic groups of order $p^nq^m$ seems
therefore feasible. The only previously known infinite family of cyclic groups,
for which Fuglede's conjecture is verified in both directions, is that of cyclic $p$-groups, i.e.
$\ZZ_{p^n}$.
\end{abstract}
\end{frontmatter}


\section{Introduction}

Let $\Om$ be a 
measurable subset of $\RR^n$ of positive Lebesgue measure.
$\Om$ is called \emph{spectral}, if it accepts an orthogonal basis of
exponentials, namely $e^{i\la\cdot x}$, where $\la$ ranges through
$\La\ssq\RR^n$; the set $\La$ is called the \emph{spectrum} of $\Om$. On the
other hand, $\Om$ is called a \emph{tile} of $\RR^n$, if there is a set
$T\ssq\RR^n$ such that almost every point of $\RR^n$ can be written uniquely as
$\om+t$, where $\om\in \Om$ and $t\in T$; $T$ is hence called the \emph{tiling
complement} of $\Om$.

Fuglede's spectral set conjecture \cite{Fuglede} asserts that $\Om$ is spectral
if and only if $\Om$ is a tile. For many years, there was a lot of positive
evidence towards the veracity of this conjecture; Fuglede himself proved this
conjecture when the spectrum or the tiling complement of $\Om$ is a lattice.
Later on, the conjecture was proved for various families of subsets by numerous
authors, for example, for $2$-dimensional convex bodies \cite{IKTconvexplanar},
unions of two intervals in $\RR$ \cite{Laba2intervals}, etc.

Against the current of research on this subject, Tao disproved this conjecture
for dimensions $5$ and above \cite{Tao04}, constructing a spectral set in $5$
dimensions that does not tile the space\footnote{Any counterexample in $n$
dimensions could be extrapolated to a counterexample in all higher
dimensions.}. Subsequently, works by Kolountzakis and Matolcsi
\cite{KMhadamard,KM06,Mat} and Farkas-Matolcsi-Mora \cite{FMM06} showed that
the conjecture is false in dimensions $3$ and above in both directions, leaving
it open for $\RR$ and $\RR^2$.

This conjecture can be naturally stated for other groups, for example $\ZZ$ or
any 
locally compact abelian group. These cases are not only
interesting on their own, but they also have connections to the original case. In
his disproof of the $5$-dimensional case, Tao constructed a spectral set in
$\ZZ_3^5$ containing $6$ elements, hence not a tile, as the cardinality of any
tile of a finite abelian group divides the order of the group; then he lifted
this counterexample to $\RR^5$. So was done in the disproof of the other direction.

Some examples of note, where Fuglede's conjecture holds, include finite cyclic
$p$-groups \cite{Laba,Qpfuglede}, $\ZZ_p\times\ZZ_p$ \cite{IMP15}, and $\QQ_p$
\cite{Qpfuglede2,Qpfuglede}, the field of $p$-adic numbers. Borrowing the
notation from \cite{DL14}, we write {\bf S-T}$(G)$, respectively {\bf
T-S}$(G)$, if the Spectral$\Rightarrow$Tile direction, respectively
Tile$\Rightarrow$Spectral, holds in $G$; when we put $G=\ZZ_N$, it is
understood that the statement holds \emph{for all} $N$.  The connection between
the conjecture on $\RR$ and on finite cyclic groups or $\ZZ$ is summarized
below \cite{DL14}:
\[\text{{\bf T-S}}(\RR)\Longleftrightarrow\text{{\bf T-S}}(\ZZ)\Longleftrightarrow\text{{\bf T-S}}(\ZZ_N),\]
and
\[\text{{\bf S-T}}(\RR)\Longrightarrow\text{{\bf S-T}}(\ZZ)\Longrightarrow\text{{\bf S-T}}(\ZZ_N).\]

According to the above connections, a counterexample in a finite cyclic group can be lifted to a counterexample in $\RR$; on the other hand, if the conjecture were true for every
cyclic group and $\ZZ$, this would hold no meaning for the original conjecture in $\RR$, unless it were proven that every spectral set in $\RR$ has a rational spectrum. 
We may ask nevertheless, to which extent is Fuglede's conjecture true for finite abelian
groups, or even cyclic ones. Surprisingly, not much is known for cyclic groups, apart from cyclic $p$-groups, i.e. $\ZZ_{p^n}$
for $p$ prime. The direction Tile$\Rightarrow$Spectral is known also for cyclic groups of order $p^nq^m$, 
for $p$, $q$ distinct primes \cite{CM,Laba}; see Section \ref{TS} below. 
\begin{thm}[\cite{CM,Laba}]\label{knownthm}
 Let $A\ssq\ZZ_N$ be a set that tiles $\ZZ_N$ by translations, where $N=p^n q^m$ for distinct primes $p$, $q$. Then $A$ is spectral.
\end{thm}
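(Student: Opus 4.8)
The plan is to reduce the statement to the cyclotomic tiling conditions of Coven--Meyerowitz and then to exhibit an explicit spectrum built from them. Encode $A$ by its mask polynomial $A(X)=\sum_{a\in A}X^{a}$, and recall that $A$ is spectral in $\ZZ_N$ exactly when there is a set $\Lambda\ssq\ZZ_N$ with $|\Lambda|=|A|$ and $\Lambda-\Lambda\ssq\set{0}\cup Z(A)$, where $Z(A)=\set{\xi : A(\zeta_N^{\xi})=0}$ and $\zeta_N=e^{2\pi i/N}$: the difference condition is precisely pairwise orthogonality of the characters $x\mapsto\zeta_N^{\lambda x}$ on $A$, which forces their linear independence, and the equality $|\Lambda|=|A|=\dim L^2(A)$ then upgrades them to a basis. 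Since $A(\zeta_N^{\xi})=0$ holds if and only if $\Phi_d\mid A$ for $d=\ord(\zeta_N^{\xi})=N/\gcd(\xi,N)$, the set $Z(A)$ is controlled entirely by which cyclotomic polynomials $\Phi_d$, $d\mid N$, $d>1$, divide $A(X)$.

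First I would extract what tiling gives. A tiling $A\oplus B=\ZZ_N$ is equivalent to $A(X)B(X)\equiv 1+X+\cdots+X^{N-1}\pmod{X^N-1}$, and evaluating this at a primitive $d$-th root of unity for each $d\mid N$, $d>1$, shows that $\Phi_d\mid A$ or $\Phi_d\mid B$. Since $|A|$ divides $N=p^{n}q^{m}$ we have $|A|=p^{a}q^{b}$, a number with at most two prime factors; lifting the tiling of $\ZZ_N$ to a periodic tiling of $\ZZ$, this is exactly the regime covered by the Coven--Meyerowitz theorem \cite{CM}, which furnishes both of their conditions for $A$: condition (T1), valid for every tile, and condition (T2), which they establish under the hypothesis that $|A|$ has at most two distinct prime divisors. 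Writing $\mathcal{P}=\set{i : \Phi_{p^{i}}\mid A}\ssq\set{1,\dots,n}$ and $\mathcal{Q}=\set{j : \Phi_{q^{j}}\mid A}\ssq\set{1,\dots,m}$, condition (T1) gives $|A|=p^{|\mathcal{P}|}q^{|\mathcal{Q}|}$, so $|\mathcal{P}|=a$ and $|\mathcal{Q}|=b$, while (T2) gives the decisive cross-prime divisibilities $\Phi_{p^{i}q^{j}}\mid A$ for all $i\in\mathcal{P}$ and $j\in\mathcal{Q}$.

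Next I would build the spectrum through $\ZZ_N\cong\ZZ_{p^{n}}\times\ZZ_{q^{m}}$. Set $\Lambda_p=\set{\sum_{i\in\mathcal{P}}c_i\,p^{\,n-i} : 0\le c_i<p}\ssq\ZZ_{p^{n}}$ and define $\Lambda_q\ssq\ZZ_{q^{m}}$ analogously from $\mathcal{Q}$, so $|\Lambda_p|=p^{a}$ and $|\Lambda_q|=q^{b}$; then take $\Lambda=\Lambda_p\times\Lambda_q$, whence $|\Lambda|=p^{a}q^{b}=|A|$. A nonzero difference of elements of $\Lambda$ reads $(\delta_p,\delta_q)$ under the isomorphism, and the associated character has order $p^{\,n-v_p(\delta_p)}q^{\,m-v_q(\delta_q)}$ (with $v_p(0):=n$, $v_q(0):=m$). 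Inspecting the lowest nonvanishing base-$p$ digit of $\delta_p$ shows $n-v_p(\delta_p)\in\mathcal{P}$ whenever $\delta_p\neq0$, and likewise for $\delta_q$; hence this order equals $p^{i}$, $q^{j}$, or $p^{i}q^{j}$ with $i\in\mathcal{P}$, $j\in\mathcal{Q}$, according to which coordinate vanishes. In each case $\Phi_d\mid A$ for $d$ equal to that order---the two pure cases by the definition of $\mathcal{P}$ and $\mathcal{Q}$, and the genuinely mixed case $d=p^{i}q^{j}$ precisely by (T2). Thus $\Lambda-\Lambda\ssq\set{0}\cup Z(A)$, so $\Lambda$ is a spectrum and $A$ is spectral.

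The heart of the matter is condition (T2): for three or more prime factors it is not known that tiling forces it, and it is the restriction to the two primes $p,q$ that makes the Coven--Meyerowitz argument available and hence supplies the cross terms $\Phi_{p^{i}q^{j}}\mid A$. Once (T2) is granted, the remaining steps---passing from the tiling to cyclotomic divisibility and checking that the product set $\Lambda_p\times\Lambda_q$ has every nonzero difference in $Z(A)$---are direct verifications, the only new input being those cross terms. I would therefore concentrate the effort on the two-prime case of (T2) and treat the spectrum construction as routine.
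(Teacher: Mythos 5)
Your proposal is correct and follows essentially the same route as the paper: lift the tiling to a periodic tiling of $\ZZ$, invoke the Coven--Meyerowitz theorems (using that $\#A\mid p^nq^m$ has at most two prime divisors) to obtain \textbf{(T1)} and \textbf{(T2)}, and then produce the spectrum $\set{\sum_{s\in S_A}k_s/s}$. The only difference is that you verify the spectrum construction by hand (via the CRT splitting and the digit argument for $v_p(\delta_p)$), where the paper cites {\L}aba's Theorem 1.5(i) for this step and merely records the explicit form of the spectrum in a remark; your verification is a correct inlining of that cited result.
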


The novel contribution of our present work is the proof of Spectral$\Rightarrow$Tile direction for cyclic groups of order $N=p^nq$, thus establishing the veracity of Fuglede's
conjecture in this setting. 
\begin{thm}\label{mainthm}
 Let $A\ssq\ZZ_N$ be a spectral set, where $N=\pnq$ for distinct primes $p$, $q$. Then $A$ tiles $\ZZ_N$ by translations.
\end{thm}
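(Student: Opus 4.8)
The plan is to show that every spectral set $A\ssq\ZZ_N$ (with $N=\pnq$) satisfies the Coven--Meyerowitz tiling conditions (T1) and (T2); since (T1) together with (T2) already forces $A$ to tile---the sufficiency direction in \cite{CM} being unconditional---this yields Theorem~\ref{mainthm}. Throughout I work with the mask polynomial $A(X)=\sum_{a\in A}X^a$ in $\ZZ[X]/(X^N-1)$ and with the divisor set $S_A=\set{d\mid N:\Phi_d\mid A(X)}$, where $\Phi_d\mid A$ means that the primitive $d$-th roots of unity are zeros of $A$. The first, routine, step is to record the dictionary between cyclotomic divisibility and equidistribution: $\Phi_{p^i}\mid A$ holds iff the counting function $r\mapsto\abs{\set{a\in A:a\equiv r\ (\mathrm{mod}\ p^i)}}$ depends only on $r$ modulo $p^{i-1}$, while $\Phi_q\mid A$ holds iff $A$ is equidistributed among the residues modulo $q$. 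Both follow by reducing $A(X)$ modulo $X^{p^i}-1$ (resp.\ $X^q-1$) and reading off the relation $\Phi_{p^i}(X)=1+X^{p^{i-1}}+\dots+X^{(p-1)p^{i-1}}$.

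Next I translate the spectral hypothesis; write $\om=e^{2\pi i/N}$ for a primitive $N$-th root of unity. A spectrum for $A$ is a set $\La\ssq\ZZ_N$ with $\abs{\La}=\abs{A}$ such that $\sum_{a\in A}\om^{(\lambda-\mu)a}=0$ for all distinct $\lambda,\mu\in\La$; equivalently $\La-\La\ssq Z_A\cup\set{0}$, where $Z_A=\set{\xi\in\ZZ_N:A(\om^{\xi})=0}$ is precisely the set of $\xi$ whose order in $\ZZ_N$ lies in $S_A$. Thus spectrality hands us, for each order $d\in S_A$ realized by some difference $\lambda-\mu$, a vanishing sum $\sum_{a\in A}\zeta^a=0$ with $\zeta$ a primitive $d$-th root of unity. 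Because $N=\pnq$ has only two prime divisors, I can feed every such relation into the Lam--Leung (and de~Bruijn) structure theorem for vanishing sums of roots of unity: any such sum, having nonnegative integer multiplicities, is a nonnegative integer combination of rotated regular $p$-gons and $q$-gons, that is, of the basic relations supported on cosets of the order-$p$ and order-$q$ subgroups. This normal form is the engine that converts the orthogonality relations of $\La$ into rigid statements about how $A$ sits inside the subgroup chain $1\mid p\mid\dots\mid p^n\mid\pnq$.

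With the dictionary and the normal form in hand, I would first establish (T1), the cardinality identity $\abs{A}=\prod_{s}\Phi_s(1)$ over prime powers $s\in S_A$, which here reads $\abs{A}=p^{\,\#\set{i\ge1:\,p^i\in S_A}}\cdot q^{[q\in S_A]}$ and in particular gives $\abs{A}\mid N$. The natural route is induction on $n$: reduce modulo the order-$p$ subgroup, use the equidistribution dictionary to strip off one cyclotomic factor, and invoke the Lam--Leung decomposition to guarantee that the $p$-part and the $q$-part of $\abs{A}$ multiply cleanly instead of interfering. The genuinely harder condition is (T2). Since $q^2\nmid N$, its only content is that $p^a\in S_A$ and $q\in S_A$ together force $\Phi_{p^aq}\mid A$, i.e.\ a \emph{new}, mixed-order relation $A(\zeta)=0$ with $\zeta$ of order $p^aq$. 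This cannot follow from equidistribution alone---standard examples satisfy (T1) but violate (T2)---so the spectral hypothesis must be used in an essential way here.

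The crux, and where I expect the main obstacle, is exactly this mixed-order step: to show that the simultaneous presence of order-$p^a$ and order-$q$ differences inside the rigid spectrum $\La$ forces $\Phi_{p^aq}$ to divide $A$. My plan is to exploit the CRT splitting $\ZZ_N\cong\ZZ_{p^n}\times\ZZ_q$ and to read the two known equidistributions---at $p$-level $a$ and modulo $q$---through the Lam--Leung normal form applied to the order-$p^aq$ differences; this should pin down the counting function $r\mapsto\abs{\set{x\in A:x\equiv r\ (\mathrm{mod}\ p^aq)}}$ up to the residual $p$-gon/$q$-gon ambiguity, and the orthogonality of $\La$ should exclude the remaining, non-tiling configurations. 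Once both (T1) and (T2) hold, the sufficiency direction of Coven--Meyerowitz \cite{CM} supplies a tiling complement and proves the theorem. I expect the heaviest part to be the bookkeeping of these mixed relations---tracking which $p$-gons and $q$-gons may occur and forcing their multiplicities to be constant across the relevant cosets.
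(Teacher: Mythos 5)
Your high-level strategy coincides with the paper's: reduce to the Coven--Meyerowitz conditions \textbf{(T1)} and \textbf{(T2)}, invoke the unconditional sufficiency direction (Theorem A of \cite{CM}), and use the Lam--Leung decomposition of vanishing sums of roots of unity into $p$-cycles and $q$-cycles as the engine. The dictionary you set up (spectrum differences landing in the zero set of the mask polynomial, cyclotomic divisibility as equidistribution) is correct and matches the paper's Theorem \ref{diff} and Lemma \ref{pqcycles}. However, the proposal stops exactly where the proof begins: both the (T1) step and the mixed-order (T2) step are left as expectations (``should pin down'', ``I expect''), and the Lam--Leung normal form by itself does not deliver them. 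A single relation $A(\zeta)=0$ with $\zeta$ of order $p^aq$ only tells you that $p^aq\cdot A$ is a multiset union of $p$-cycles and $q$-cycles; to extract anything quantitative one must exploit \emph{chains} of such relations $A(\om^{d_1})=\dots=A(\om^{d_k})=0$ with $d_1\mid\dots\mid d_k$, and that is where all the work lies.

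Concretely, the paper's argument needs the following ingredients, none of which appear in your plan. (i) Set $m=\#\set{a:A(\om^{p^aq})=0}$ and induct on $m$. (ii) An upper bound $\#A\le p^mq$, obtained by pigeonholing the $p$-adic digits of spectrum elements at the positions $a_1<\dots<a_m$ where these roots occur, and a matching lower bound $\#A\ge p^mq$ (when $\#A>p^m$), obtained by iterating Lemma \ref{pqcycles} along the chain $p^{a_1}q\mid\dots\mid p^{a_m}q$ to show that the largest coefficient of $A(X^{p^{a_j}q})$ grows by a factor of $p$ at each step (Claims \ref{firstbound} and \ref{secondbound}); the resulting dichotomy ($\#A\le p^m$ or $\#A=p^mq$) is what yields (T1). (iii) To run the induction one must partition $A$ into $p$ pieces $A_0,\dots,A_{p-1}$ according to the position of $p^{a_m}qa$ modulo $N$, prove that each piece inherits the cyclotomic divisors of $A$ below level $p^{a_m}q$, and --- crucially --- prove that each piece is itself \emph{spectral}, which requires a pigeonhole on the spectrum and a case split on whether the largest $a$ with $(B-B)\cap p^a\ZZ_N^{\star}\ne\vn$ exceeds $a_m$. (iv) Finally, the (T2) conclusion $A(\om^{p^{a_m}})=0$ is forced by an exact-counting argument (Proposition \ref{ManyRoots}) resting on the uniqueness of representations $p^mq=sp^m+tq$ with $s,t\ge0$ (Lemma \ref{combo}), not by the normal form alone. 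Without (i)--(iv), or a substitute for them, what you have is a plausible program rather than a proof; in particular, your hope that the orthogonality of $\La$ ``excludes the remaining non-tiling configurations'' at a single mixed level $p^aq$ is precisely the statement that remains to be proven.
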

The proof relies heavily on the structure of vanishing sums of roots of unity of order $N$, where $N$ is divided by at most two primes. The fact that
such sums are \emph{nonnegative} linear combinations of $p$- and $q$-cycles \cite{LL} gives efficient bounds on spectral sets $A\ssq\ZZ_N$; we believe that these techniques can be extended
to every cyclic group of order $N=p^nq^m$. Unfortunately, we have not managed to conclude our proof in this more general setting so far.

The proof for the direction Tile$\Rightarrow$Spectral for $N=p^nq^m$ seems easier for the following reasons: firstly, it is facilitated greatly by the conditions {\bf (T1)} and {\bf (T2)} to be
defined in Section \ref{prelim}, that are equivalent to the tiling condition \cite{CM} when $N$ has at most two distinct prime factors and thus impose a very strong structure on the 
tiling set $A$, yielding also the spectrality thereof (not to mention that the tiling condition alone yields $\#A\mid N$, restricting the size of $A$ down to the divisors of $N$); 
secondly, although spectrality gives some structure on $A$, connecting the set of roots of the {\em mask polynomial} of $A$ with the difference set of a spectrum $B$, it is difficult to
obtain new information about $A$ when $A(X)$ vanishes on primitive root of unity whose order is not prime power. We overcome this obstacle by virtue of the structure of the vanishing sums
of roots of unity, obtaining strong lower bounds for the size of the spectrum, leading to a contradiction whenever one of the conditions {\bf (T1)} and {\bf (T2)} is not met. The latter method
is most efficient when 
\[\wh\chi_A(d_1)=\wh\chi_A(d_2)=\dotsb=\wh\chi_A(d_k)=0,\]
for $d_1\mid d_2\mid \dotsb \mid d_k \mid N$ and $k$ relatively large. This successive divisibility property occurs much more frequently when the power of $p$ or $q$ in the factorization of $N$
is small, and this limitation did not allow us to go further than the case $N=p^n q$, for the time being.

Lastly, we have to emphasize that these techniques can be no further extended. Consider the following vanishing sum of roots of unity of order $N=pqr$, where $p$, $q$, $r$
are distinct primes:
\[(\om_p+\om_p^2+\dotsb+\om_p^{p-1})(\om_q+\om_q^2+\dotsb+\om_q^{q-1})+(\om_r+\om_r^2+\dotsb+\om_r^{r-1})=(-1)(-1)+(-1)=0,\]
where $\om_n=e^{2\pi i/n}$.
It is known that this sum cannot be expressed as a nonnegative linear combination of $p$-, $q$-, or $r$-cycles \cite{LL}, therefore we cannot obtain strong lower bounds on the size of a
spectral subset of $\ZZ_N$. Whether sums such as the above lead to a counterexample in some finite cyclic group and eventually in $\RR$, remains to be seen.

\bigskip
\bigskip

\section{Preliminaries}\label{prelim}
\bigskip
\bigskip
 
 Let $\ZZ_N$ denote the ring of integers modulo $N$. With every (multi)set $A$ with elements from $\ZZ_N$, we associate a polynomial in the quotient ring $R=\ZZ[X]/(X^N-1)$, say
 \[A(X)=\sum_{a\in A}m_aX^a,\]
 where $m_a$ is the multiplicity of $a$ in the multi-set $A$. $A$ is a proper set, if and only if $A(X)$ has coefficients $0$ and $1$ (it is understood that we write any element
 in $R$ as a linear combination of $1,X,\dotsc,X^{N-1}$). $A(X)$ is called the \emph{mask polynomial} of $A$; it has the following connection with the Fourier transform of
 the characteristic function of $A$:
 \[\wh\chi_A(n)=A(\om^{-n}),\]
 where $\om=e^{2\pi i/N}$ throughout this paper.
 
 A subset $A\ssq\ZZ_N$ is called \emph{spectral} if there is
 a set $B$ with $\#A=\#B$, such that the set of exponentials 
 \[x\mapsto e^{\frac{2\pi ibx}{N}},\	b\in B\]
 is orthogonal on $A$ with the usual inner product.
 \begin{thm}\label{diff}
  Let $A\ssq\ZZ_N$ be spectral. Then
 \[B-B\ssq\set{0}\cup\set{n:A(\om^{n})=0}.\] 
 \end{thm}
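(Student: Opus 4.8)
The plan is to translate the orthogonality hypothesis directly into the vanishing of the mask polynomial at roots of unity. By definition, $A$ being spectral supplies a set $B$ with $\#B=\#A$ such that the exponentials $e_b\colon x\mapsto e^{2\pi i bx/N}$, $b\in B$, are pairwise orthogonal on $A$ under the usual inner product $\scal{f,g}=\sum_{a\in A}f(a)\overline{g(a)}$. The entire content of the theorem lies in computing one such inner product and recognizing it as a value of $A(X)$ at a root of unity.

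First I would fix two distinct elements $b,b'\in B$ and expand
\[
\scal{e_b,e_{b'}}=\sum_{a\in A}e^{2\pi i ba/N}\,\overline{e^{2\pi i b'a/N}}=\sum_{a\in A}\om^{(b-b')a}=A(\om^{b-b'}),
\]
where the last equality uses $A(X)=\sum_{a\in A}X^a$ and $\om=e^{2\pi i/N}$. Since $b\neq b'$, orthogonality of $e_b$ and $e_{b'}$ forces this sum to vanish, so $A(\om^{b-b'})=0$. It then remains only to read off the inclusion: every nonzero $n\in B-B$ is of the form $n=b-b'$ with $b,b'\in B$ distinct, hence satisfies $A(\om^n)=0$ and lies in $\set{n:A(\om^n)=0}$, while the element $0$ is accounted for by the $\set{0}$ on the right-hand side. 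This yields $B-B\ssq\set{0}\cup\set{n:A(\om^n)=0}$.

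Strictly speaking there is no substantial obstacle here; the statement is a direct unwinding of the definition of spectrality, so the only point requiring a moment's care is the sign and conjugation convention. Since $A$ has integer coefficients we have $\overline{A(\om^n)}=A(\om^{-n})$, so the zero locus $\set{n:A(\om^n)=0}$ is symmetric under $n\mapsto-n$; this makes the conclusion independent of whether one writes the exponent as $b-b'$ or $b'-b$, and reconciles it with the Fourier-transform normalization $\wh\chi_A(n)=A(\om^{-n})$ recorded above.
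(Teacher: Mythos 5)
Your proof is correct and follows essentially the same route as the paper's: expand the inner product of two distinct exponentials over $A$, identify it as $A(\om^{b-b'})$, and conclude the inclusion from orthogonality. The closing remark about the symmetry of the zero locus under $n\mapsto -n$ is a harmless extra observation that the paper leaves implicit.
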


 \begin{proof} 
By definition, we get
 \[\sum_{a\in A}e^{\frac{2\pi i(b-b')a}{N}}=0,\]
 whenever $b,b'$ are distinct elements of $B$. This is equivalent to the condition
 \[B-B\ssq\set{0}\cup\set{n:A(\om^{n})=0}.\qedhere\]
 \end{proof}

 There is a natural action of the Galois group
 \[G=\Gal(\QQ(\om)/\QQ)\cong\ZZ_N^{\star}\]
 on the values of $A(X)$, given by
 \begin{equation}\label{galaction}
 \sig(A(\om^a))=A(\om^{ag}),
 \end{equation}
 where $\sig\in G$ is determined by $\sig(\om)=\om^g$, for some $g\in\ZZ_N^{\star}$. Therefore, in order to determine the
 support of $A(\om^n)$ we only need to evaluate at the divisors of $N$.
 For any integer $N$, the \emph{divisor class} of $N$ with respect to $n>0$, a divisor of $N$, is simply $n\ZZ_N^{\star}$, which is the set of residues $\bmod N$
 whose greatest common divisor with $N$ is equal to $n$. The set $\set{n:A(\om^n)=0}$ is just a union of divisor classes. This is equivalent to the fact that
 if $A(\om^d)=0$ for some $d\mid N$, then $\Phi_{N/d}(X)\mid A(X)$, where $\Phi_n(X)$ denotes the $n$-th cyclotomic polynomial.
 
 We also denote 
 \[D:=\set{n\in\NN : n\mid N, A(\om^m)\neq0, \text{ for all }m\text{ with }n\mid m \text{ and } m\mid N}.\]
  \begin{prop}\label{D0}
  If $A\ssq\ZZ_N$ is spectral and $n\in D$, then $\#A\leq n$.
 \end{prop}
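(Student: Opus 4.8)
The plan is to exhibit a spectrum $B$ for $A$ (so that $\#A=\#B$) and to show that the reduction-mod-$n$ homomorphism $\ZZ_N\rar\ZZ_n$, which is well-defined because $n\mid N$, is injective on $B$; this forces $\#A=\#B\le n$ immediately. The engine driving everything is Theorem \ref{diff}: every nonzero difference $b-b'$ of distinct elements of $B$ lies in the zero set $Z:=\set{m:A(\om^m)=0}$. Hence it suffices to prove that $Z$ contains no nonzero multiple of $n$.

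First I would translate the hypothesis $n\in D$ from a statement about divisors of $N$ into a statement about entire divisor classes. Fix $x\in\ZZ_N$ with $n\mid x$ and $x\neq0$, and put $d=\gcd(x,N)$. Since $n\mid x$ and $n\mid N$ we get $n\mid d$, while of course $d\mid N$; the defining property of $D$ then yields $A(\om^d)\neq0$. Because $\om^x$ and $\om^d$ are both primitive $(N/d)$-th roots of unity, they are Galois conjugate over $\QQ$, so by the action \eqref{galaction} the values $A(\om^x)$ and $A(\om^d)$ vanish simultaneously; therefore $A(\om^x)\neq0$. This shows $Z$ contains no nonzero multiple of $n$.

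Combining the two observations, for distinct $b,b'\in B$ the difference $b-b'$ is a nonzero element of $Z$ and hence is not a multiple of $n$, i.e. $b\not\equiv b'\pmod n$. Thus the residues modulo $n$ of the elements of $B$ are pairwise distinct, the reduction map $B\rar\ZZ_n$ is injective, and $\#A=\#B\le n$.

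The argument is short, and the single delicate point—what I would flag as the crux—is the first step: one must not read $n\in D$ too literally as a condition only on the divisors $m$ of $N$, but instead propagate the nonvanishing across the whole divisor class by the Galois symmetry, so that it applies to arbitrary (not necessarily divisor) multiples of $n$ in $\ZZ_N$. Once $Z$ is understood as a union of divisor classes, the injectivity of reduction mod $n$ on $B$ is automatic and the bound follows.
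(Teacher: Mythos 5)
Your proof is correct and follows essentially the same route as the paper's: the paper's one-line argument invokes Theorem \ref{diff} together with the fact (established earlier in Section \ref{prelim}) that $\set{m:A(\om^m)=0}$ is a union of divisor classes, which is exactly the Galois-conjugacy step you spell out explicitly to pass from divisors to arbitrary nonzero multiples of $n$. The conclusion via injectivity of reduction mod $n$ on the spectrum $B$ is identical.
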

 \begin{proof}
  By Theorem \ref{diff} and the hypothesis we get
  \[(B-B)\cap n\ZZ_N=\set{0}\]
  for a spectrum $B$ of $A$. Hence, no two distinct elements of $B$ can have the same residue $\bmod n$, thus obtaining $\#A=\#B\leq n$.
 \end{proof}
 
 
 The following properties on polynomials were introduced by Coven and Meyerowitz \cite{CM} in their effort to characterize finite sets that tile the integers by
 translations. We will adapt this definition for subsets of $\ZZ_N$.
 
 \begin{defn}
  Let $A\ssq\ZZ_N$, and let $A(X)$ be its mask polynomial. The set of prime powers dividing $N$ is denoted by $S$; define
  \[S_A=\set{s\in S:\Phi_s(X)\mid A(X)}.\]
  We say that $A$ satisfies the property {\bf (T1)}, if 
  \[\#A=A(1)=\prod_{s\in S_A}\Phi_s(1),\]
  and that it satisfies {\bf (T2)}, if for every distinct elements $s_1,\dotsc,s_m$ of $S_A$, $\Phi_{s_1\dotsm s_m}(x)$ divides $A(x)$.
 \end{defn}

 \begin{thm}[Theorem A \cite{CM}]\label{t1t2}
  If $A\ssq\ZZ_N$ satisfies {\bf (T1)} and {\bf (T2)}, then $A$ tiles $\ZZ_N$ by translations.
 \end{thm}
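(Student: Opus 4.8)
The plan is to reduce tiling to a statement about cyclotomic divisors and then to build an explicit complement. Recall that $A$ tiles $\ZZ_N$ precisely when there is a set $T\ssq\ZZ_N$ with $\#A\cdot\#T=N$ such that the convolution $\chi_A*\chi_T$ is identically $1$; taking Fourier transforms, this is equivalent to $\#A\cdot\#T=N$ together with the condition that for every divisor $d\mid N$ with $d>1$ one has $\Phi_d(X)\mid A(X)$ or $\Phi_d(X)\mid T(X)$ (since $\wh\chi_A(j)\wh\chi_T(j)=A(\om^{-j})T(\om^{-j})$ must vanish for every $j\neq0$, and the nonzero frequencies group into Galois orbits indexed by the divisors $d>1$). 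So the whole task is to exhibit a single set $T$ meeting these two requirements, and the data telling me which $\Phi_d$ I am still free to place in $T$ is exactly $S_A$ and its complement.

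Write $N=\prod_{i=1}^k p_i^{a_i}$. For each prime $p_i$ I would set $E_i=\set{1\le\alpha\le a_i:p_i^\alpha\notin S_A}$ and let $T_i\ssq\ZZ_{p_i^{a_i}}$ be the set whose mask polynomial is $\prod_{\alpha\in E_i}\Phi_{p_i^\alpha}(Y)$. The first small lemma to check is that this product really is a $0$–$1$ polynomial: since $\Phi_{p_i^\alpha}(Y)=1+Y^{p_i^{\alpha-1}}+\dotsb+Y^{(p_i-1)p_i^{\alpha-1}}$ is supported at multiples of $p_i^{\alpha-1}$ and the scales $p_i^{\alpha-1}$ for distinct $\alpha$ separate (a full block at one scale cannot reach the next), the exponents appearing in the expansion are pairwise distinct, so $T_i$ is a genuine subset of $\ZZ_{p_i^{a_i}}$. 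Using the Chinese Remainder isomorphism $\ZZ_N\cong\prod_i\ZZ_{p_i^{a_i}}$ I then define $T$ to be the product set corresponding to $(T_1,\dotsc,T_k)$.

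It remains to verify the two requirements. For the size, $\#T=\prod_i\#T_i=\prod_i\prod_{\alpha\in E_i}\Phi_{p_i^\alpha}(1)=\prod_i p_i^{a_i-m_i}$, where $m_i$ counts the prime powers of $p_i$ lying in $S_A$; since {\bf (T1)} gives $\#A=\prod_{s\in S_A}\Phi_s(1)=\prod_i p_i^{m_i}$, the product is $\#A\cdot\#T=N$. For the divisibility, the product structure makes $\wh\chi_T$ factor over the CRT components, so for a character of order $d=\prod_i p_i^{\beta_i}$ one gets $\wh\chi_T=0$ exactly when some factor vanishes, i.e.\ when $p_i^{\beta_i}\notin S_A$ for some $i$ with $\beta_i\ge1$; equivalently, $\Phi_d(X)\nmid T(X)$ forces every prime-power part $p_i^{\beta_i}$ (with $\beta_i\ge1$) to lie in $S_A$. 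But these parts are prime powers attached to distinct primes, so {\bf (T2)} yields $\Phi_d(X)\mid A(X)$. Hence for every $d>1$ either $\Phi_d\mid A$ or $\Phi_d\mid T$, and together with $\#A\cdot\#T=N$ this means $A\oplus T=\ZZ_N$.

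The step I expect to be the crux is the divisibility verification, and specifically the realization that {\bf (T2)} is exactly the hypothesis needed to cover the composite (non–prime-power) divisors $d$: the prime-power cyclotomics are disposed of automatically by the complementary choice $E_i$, but a divisor with several distinct prime factors can only be forced onto the $A$ side, and nothing weaker than {\bf (T2)} would guarantee $\Phi_d\mid A$ there. The remaining care is bookkeeping: checking the $0$–$1$ property of each $T_i$ and tracking orders of characters correctly through the Chinese Remainder isomorphism so that ``$\Phi_{p_i^{\beta_i}}\mid T_i$'' matches ``$\beta_i\in E_i$.''
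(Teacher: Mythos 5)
Your proof is correct and is essentially the argument behind the result the paper cites here without proof (Theorem A of \cite{CM}): reduce tiling to the cyclotomic criterion of Lemma \ref{13}(4), build the complement as a CRT product of the prime-power cyclotomics missing from $S_A$, use {\bf (T1)} to get $\#A\cdot\#T=N$, and observe that {\bf (T2)} is exactly what covers the composite divisors that the complement cannot. The same complement construction is carried out in the paper's Section \ref{ST0} for the special case $N=p^n$, so your write-up matches both the cited source and the paper's own use of the idea.
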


 Let $d\mid N$. A $d$-cycle is a coset by the cyclic subgroup of $N$ with $d$ elements, that is, a set of the form
 \[\set{j,j+N/d,j+2N/d,\dotsc,j+(d-1)N/d}.\]
 Especially in the case when $N$ has only two prime divisors, the following Lemma allows us to discern the structure of $n\cdot A:=\set{na:a\in A}$ (a multi-set), 
 whenever $A(\om^n)=0$. In particular, it says that $n\cdot A$ must be the union of $p$- and $q$-cycles.

 \begin{lemma}\label{pqcycles}
 Let $n\mid N$ be such that $N/n$ has at most two prime divisors, say $p$ and $q$. If $A(\om^n)=0$, then 
 \begin{equation}\label{cycles}
 A(X^n)\equiv P_n(X^n)\Phi_p(X^{N/p})+Q_n(X^n)\Phi_q(X^{N/q})\bmod X^N-1,
 \end{equation}
 where $P_n$ and $Q_n$ have nonnegative coefficients.
 \end{lemma}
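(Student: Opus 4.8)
The plan is to reduce the statement to the structure theorem on vanishing sums of roots of unity proved in \cite{LL}. Write $M = N/n$ and work modulo $X^N - 1$. Since $n \mid N$, every exponent satisfies $na \equiv n(a \bmod M) \pmod{N}$, so the reduction of $A(X^n)$ modulo $X^N-1$ is supported on the subgroup $n\ZZ_N = \set{0, n, \dotsc, (M-1)n}$. Concretely, setting $c_k = \#\set{a \in A : a \equiv k \bmod M}$ for $0 \le k \le M-1$ and $F(Y) = \sum_{k=0}^{M-1} c_k Y^k$, one has
\[A(X^n) \equiv F(X^n) \pmod{X^N - 1},\]
and $F$ has nonnegative integer coefficients. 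Note that $Y \mapsto X^n$ defines a ring homomorphism $\ZZ[Y]/(Y^M - 1) \to \ZZ[X]/(X^N - 1)$, since $Y^M - 1 \mapsto X^N - 1$; this is the change of variables through which the claim will be recovered.

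Next I translate the hypothesis. As $n \mid N$, the element $\ze := \om^n$ is a primitive $M$-th root of unity, so $A(\om^n)=0$ becomes
\[F(\ze) = \sum_{k=0}^{M-1} c_k \ze^k = 0.\]
Thus $(c_k)_k$ encodes a vanishing sum of $M$-th roots of unity with nonnegative integer coefficients, where by hypothesis $M$ has at most two prime divisors $p$ and $q$.

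Now I invoke \cite{LL}: when the order $M$ has at most two distinct prime divisors, every such vanishing sum is a nonnegative integer combination of rotated full sets of $p$-th roots of unity and of $q$-th roots of unity. In $\ZZ[Y]/(Y^M - 1)$ the full set of $p$-th roots rotated by $k$ is precisely the $p$-coset mask $Y^k \Phi_p(Y^{M/p})$, and similarly for $q$. Hence there are nonnegative integers $\al_k, \be_l$ with
\[F(Y) \equiv \Bigl(\sum_k \al_k Y^k\Bigr)\Phi_p(Y^{M/p}) + \Bigl(\sum_l \be_l Y^l\Bigr)\Phi_q(Y^{M/q}) \pmod{Y^M - 1}.\]
Applying the homomorphism $Y \mapsto X^n$, and using $Y^{M/p} \mapsto X^{N/p}$ and $Y^{M/q} \mapsto X^{N/q}$, I set $P_n(Y) = \sum_k \al_k Y^k$ and $Q_n(Y) = \sum_l \be_l Y^l$ (both with nonnegative coefficients) to obtain
\[A(X^n) \equiv F(X^n) \equiv P_n(X^n)\Php + Q_n(X^n)\Phq \pmod{X^N - 1},\]
which is exactly \eqref{cycles}. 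If only one prime divides $M$, the corresponding other polynomial is simply taken to be $0$.

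The only genuinely nontrivial ingredient is the two-prime structure theorem from \cite{LL}; everything else is the bookkeeping of identifying the multiset $n\cdot A$ with a function on $\ZZ_M$ and pushing the resulting coset decomposition back to $\ZZ_N$ through the substitution $Y = X^n$. The point that requires care is that the decomposition supplied by \cite{LL} must be read as an honest identity in the group ring $\ZZ[Y]/(Y^M - 1)$ (equivalently, an equality of multisets of roots), rather than a mere numerical coincidence of complex values, since it is the former that survives the substitution $Y \mapsto X^n$ and yields the asserted nonnegativity of $P_n$ and $Q_n$.
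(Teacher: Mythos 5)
Your proposal is correct and follows essentially the same route as the paper: both reduce $A(\om^n)=0$ to a vanishing sum of $(N/n)$-th roots of unity with nonnegative integer coefficients, invoke the Lam--Leung structure theorem to decompose it (as a multiset identity in the group ring, not merely a numerical one) into $p$- and $q$-cycles, and translate back into mask polynomials via the substitution $Y\mapsto X^n$. Your write-up is merely more explicit about the intermediate ring $\ZZ[Y]/(Y^M-1)$ and the homomorphism, which the paper leaves implicit.
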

 \begin{proof}
  By definition, $A(\om^n)$ is a vanishing sum of roots of unity of order $N/n$, in particular
  \[0=A(\om^n)=\sum_{a\in A}\om^{na}.\]
  As $N/n$ has at most two prime divisors, $p$ and $q$, the above sum can be written as linear combination with \emph{nonnegative integer coefficients} of sums of the form
  \[\om^k(1+\om_p+\om_p^2+\dotsb+\om_p^{p-1})\]
  or
  \[\om^k(1+\om_q+\om_q^2+\dotsb+\om_q^{q-1}),\]
  due to Theorem 3.3 from \cite{LL}, which shows that $n\cdot A$ is the union of $p$- and $q$-cycles, as multi-sets. Every $p$-cycle
  has a mask polynomial of the form $X^k\Phi_p(X^{N/p})$; union of multi-sets corresponds to addition of the mask polynomials, thus obtaining \eqref{cycles}. We note that
  the argument of $P_n,Q_n$ is $X^n$, simply because $A(X^n)$ can be expressed in terms of powers of $X^n$, as $n\mid N$.
 \end{proof}

 \begin{rem}
  If $N/n$ has only one prime divisor, say $p$, then it is understood that $Q_n\equiv 0$.
 \end{rem}
 
 It is also useful to find conditions under which $n\cdot A$ cannot be written as a union of $p$-cycles or $q$-cycles only, or equivalently, $P_nQ_n\not\equiv 0$, for every such
 possible choice of $P_n$ and $Q_n$.
 
 \begin{prop}\label{PQ}
  Let $N$ have only two prime divisors, say $p$ and $q$, and $A(\om^n)=0$, for some $n\mid N$, so that
  \[A(X^n)\equiv P_n(X^n)\Phi_p(X^{N/p})+Q_n(X^n)\Phi_q(X^{N/q})\bmod X^N-1.\]
  If $A(\om^{np^a})\neq0$ for some $a>0$, then $P_n\not\equiv0$, and if $A(\om^{nq^b})\neq0$ for some $b>0$, then $Q_n\not\equiv0$.
 \end{prop}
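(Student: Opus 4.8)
The plan is to prove each implication by its contrapositive, evaluating the congruence \eqref{cycles} at a carefully chosen root of unity. I will treat the first assertion, the second being identical after interchanging $p$ and $q$. So I would assume $P_n\equiv0$ and aim to show that $A(\om^{np^a})=0$ for every $a>0$; this is exactly the contrapositive of the statement that $A(\om^{np^a})\neq0$ for some $a>0$ forces $P_n\not\equiv0$.

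Since $\om^{p^a}$ is an $N$-th root of unity, it is legitimate to substitute $X=\om^{p^a}$ into the congruence \eqref{cycles}, which holds modulo $X^N-1$; because $A(X^n)$ evaluated at $X=\om^{p^a}$ is $A(\om^{np^a})$, this yields
\[A(\om^{np^a})=P_n(\om^{np^a})\,\Phi_p\bra{\om^{p^aN/p}}+Q_n(\om^{np^a})\,\Phi_q\bra{\om^{p^aN/q}}.\]
The crux is to evaluate the two cyclotomic factors. The element $\om^{N/p}$ is a primitive $p$-th root of unity, so $\om^{p^aN/p}=(\om^{N/p})^{p^a}=1$ for every $a\geq1$, whence $\Phi_p(\om^{p^aN/p})=\Phi_p(1)=p$. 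On the other hand, $\om^{N/q}$ is a primitive $q$-th root of unity, and since $\gcd(p^a,q)=1$ the power $\om^{p^aN/q}=(\om^{N/q})^{p^a}$ is again a primitive $q$-th root of unity, so $\Phi_q(\om^{p^aN/q})=0$.

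Substituting these values leaves $A(\om^{np^a})=p\,P_n(\om^{np^a})$, and therefore the assumption $P_n\equiv0$ forces $A(\om^{np^a})=0$ for all $a>0$, as desired. For the second assertion I would instead substitute $X=\om^{q^b}$: the same computation with $p$ and $q$ exchanged gives $\Phi_q(\om^{q^bN/q})=\Phi_q(1)=q$ and, using $\gcd(q^b,p)=1$, $\Phi_p(\om^{q^bN/p})=0$, so that $A(\om^{nq^b})=q\,Q_n(\om^{nq^b})$, whence $Q_n\equiv0$ implies $A(\om^{nq^b})=0$ for all $b>0$. There is no serious obstacle here; the entire content is the observation that evaluating at a power of one prime annihilates the cyclotomic factor attached to the \emph{other} prime, while the matching factor collapses to the prime itself. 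In particular the nonnegativity of the coefficients of $P_n$ and $Q_n$ plays no role in this statement.
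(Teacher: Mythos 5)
Your proof is correct and follows essentially the same route as the paper: both argue by contrapositive and exploit that evaluating the decomposition at $\om^{np^a}$ (resp.\ $\om^{nq^b}$) kills the cyclotomic factor attached to the other prime, leaving $A(\om^{np^a})=p\,P_n(\om^{np^a})$ (resp.\ $A(\om^{nq^b})=q\,Q_n(\om^{nq^b})$). The paper merely phrases the substitution at the level of polynomials modulo $X^N-1$ before evaluating at $\om$, and you are right that the nonnegativity of the coefficients is not used here.
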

 \begin{proof}
  If $Q_n\equiv0$, then 
  \[A(X^{nq^b})\equiv P_n(X^{nq^b})\Phi_p(X^{Nq^b/p})\equiv P_n(X^{nq^b})\Phi_p(X^{N/p}),\]
  thus obtaining $A(\om^{nq^b})=0$. The other case is proven similarly.
 \end{proof}

 {\bf (T1)} and {\bf (T2)} are conjectured to hold if and only if $A$ tiles $\ZZ_N$ \cite{CM} (this conjecture was initially formulated in $\ZZ$). For every set $A\ssq\ZZ_N$,
 however, a weaker property than {\bf (T1)} holds, that is very useful for bounding $\#A$.
 
 \begin{prop}\label{lower}
  Let $A\ssq\ZZ_N$. Then $\prod_{s\in S_A}\Phi_s(1)$ divides $\#A$. In particular, if $p$ is a prime divisor of $N$, and there are $m$ integers $0<a_1<a_2<\dotsb<a_m$
  such that $A(\om^{N/p^{a_i}})=0$ for all $1\leq i\leq m$, then $p^m\mid \#A$, where $p^{a_m}\mid N$.
 \end{prop}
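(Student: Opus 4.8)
The plan is to reduce the claim to an elementary divisibility of integer polynomials evaluated at $X=1$. First I would record two standard facts. For a prime power $s=p^a$ with $a\geq1$ one has $\Phi_s(1)=p$; since every element of $S$ is such a prime power, each factor $\Phi_s(1)$ occurring in the product is a prime. Second, as already noted in the preliminaries, for $d\mid N$ the vanishing $A(\om^d)=0$ is equivalent to $\Phi_{N/d}(X)\mid A(X)$; in particular $\Phi_s(X)\mid A(X)$ for every $s\in S_A$, and these divisibilities hold in $\ZZ[X]$ because each $\Phi_s$ is monic with integer coefficients.

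The core step is to pass from the individual divisibilities to divisibility by their product. The cyclotomic polynomials $\{\Phi_s\}_{s\in S_A}$ are distinct monic irreducibles in $\QQ[X]$, hence pairwise coprime, so $\prod_{s\in S_A}\Phi_s(X)$ divides $A(X)$ in $\QQ[X]$. Writing $A(X)=\bigl(\prod_{s\in S_A}\Phi_s(X)\bigr)Q(X)$ and using that the left factor is monic with integer coefficients, division by a monic polynomial (Gauss's lemma) forces $Q\in\ZZ[X]$. Evaluating this identity at $X=1$ then gives $\#A=A(1)=\bigl(\prod_{s\in S_A}\Phi_s(1)\bigr)Q(1)$, whence $\prod_{s\in S_A}\Phi_s(1)\mid\#A$, which is the first assertion.

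For the ``in particular'' clause I would translate the hypothesis $A(\om^{N/p^{a_i}})=0$, via $N/(N/p^{a_i})=p^{a_i}$, into $\Phi_{p^{a_i}}(X)\mid A(X)$, i.e. $p^{a_i}\in S_A$ for $1\leq i\leq m$. Since $p^{a_m}\mid N$ and the $a_i$ are distinct, these are $m$ distinct prime powers of $p$ lying in $S_A$, each contributing a factor $\Phi_{p^{a_i}}(1)=p$. As all the factors $\Phi_s(1)$ are positive integers, the sub-product $p^m=\prod_{i=1}^m\Phi_{p^{a_i}}(1)$ divides $\prod_{s\in S_A}\Phi_s(1)$, which in turn divides $\#A$ by the first part; hence $p^m\mid\#A$.

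The only point requiring genuine care is the integrality of the cofactor $Q$: one must not merely divide in $\QQ[X]$ but verify, through division by the monic polynomial $\prod_{s\in S_A}\Phi_s(X)$, that $Q$ has integer coefficients, since the conclusion is a statement about divisibility of integers. Everything else — the value $\Phi_{p^a}(1)=p$, the coprimality of distinct cyclotomics, and the passage from $A(\om^d)=0$ to a cyclotomic factor of $A(X)$ — is routine and is already available in the preliminaries.
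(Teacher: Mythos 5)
Your proposal is correct and follows essentially the same route as the paper: the paper's proof simply asserts that $\prod_{s\in S_A}\Phi_s(X)$ divides $A(X)$ in $\ZZ[X]$ and evaluates at $X=1$, and your argument just fills in the (routine) justifications — pairwise coprimality of distinct cyclotomics and integrality of the cofactor via division by a monic polynomial — together with $\Phi_{p^a}(1)=p$ for the second assertion.
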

 \begin{proof}
  By definition, $A(X)$ is divided by $\prod_{s\in S_A}\Phi_s(X)$ in $\ZZ[X]$. Putting $X=1$, we get the desired result.
 \end{proof}

 In the rest of this paper we will prove Fuglede's conjecture on $\ZZ_N$, where $N=p^nq$, for $p\neq q$ primes. The direction Tile$\Rightarrow$Spectral can be deduced from
 the work of Coven-Meyerowitz \cite{CM} and \L{}aba \cite{Laba} in the more general case when $N$ has at most two prime divisors, and is shown in Section \ref{TS}.
 The direction Spectral$\Rightarrow$Tile is proven in section \ref{ST}. For completeness, we will also present the Spectral$\Rightarrow$Tile proof for $N=p^n$ in
 section \ref{ST0} due to its elegance and brevity, although a proof for this case follows from \cite{Laba}; a different proof also appeared in \cite{Qpfuglede}.
 
 One final tool that will be very useful in this note, is the following:
 
 \begin{lemma}\label{combo}
  Let $m,n>0$ be two relatively prime integers, and $0<k<mn$ another integer. Then, there is at most one pair $(s,t)$ of nonnegative integers, such that $k=sm+tn$. If $k=mn$,
  then there are exactly two such pairs, namely $(n,0)$ and $(0,m)$.
 \end{lemma}
 
 \begin{proof}
  Assume that $0<k<mn$ and there is a pair $(s,t)$ such that $k=sm+tn$, with $s,t\geq 0$. All other pairs of integer solutions $(s',t')$ to the Diophantine equation
  $k=s'm+t'n$, satisfy $s'=s-nx$, $t'=t+mx$, for $x\in\ZZ$. If $x>0$, then $s'\leq s-n\leq k/m-n<0$, whereas if $x<0$ we get $t'\leq t-m\leq k/n-m<0$. The case $k=mn$
  is proven similarly.
 \end{proof}

\bigskip
\bigskip

 \section{Tile$\Rightarrow$Spectral, $N=p^nq^m$}\label{TS}
 \bigskip
 \bigskip

 In this section, we will review the proof of Theorem \ref{knownthm}.
 The proof is not new, and is based on combined arguments from \cite{CM} and \cite{Laba}, even though the case for finite cyclic groups is not explicitly mentioned. 
 We will need the following tools from \cite{CM} and \cite{Laba}.
 
 \begin{lemma}[Lemma 1.3, \cite{CM}]\label{13}
   Let $N$ be an integer and let $A$ and $B$ be finite multisets of nonnegative integers with mask polynomials $A(X)$ and $B(X)$. Then the
following statements are equivalent. Each forces $A$ and $B$ to be sets such that $\#A\#B=A(1)B(1)=N$.
\begin{enumerate}
 \item $A\oplus(B\oplus N\ZZ)=\ZZ$ is a tiling.
 \item $A\oplus B$ is a complete set of residues modulo $N$.
 \item $A(X)B(X)\equiv1+X+\dotsb X^{N-1}\bmod X^N-1$.
 \item $N=A(1)B(1)$ and for every factor $t>1$ of $N$, the cyclotomic polynomial $\Phi_t(X)$ is a divisor of $A(X)$ or $B(X)$.
\end{enumerate}
\end{lemma}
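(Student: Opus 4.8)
The plan is to establish the four-way equivalence through the cycle $(1)\Leftrightarrow(2)\Leftrightarrow(3)$ together with $(3)\Leftrightarrow(4)$, treating the first two equivalences as essentially formal and concentrating the real work on the passage between the polynomial identity in (3) and the cyclotomic divisibility condition in (4). First I would dispatch $(1)\Leftrightarrow(2)$ as the standard translation between a periodic tiling of \ZZ\ and a tiling of the cyclic group: the decomposition $A\oplus(B\oplus N\ZZ)=\ZZ$ says that every integer is uniquely $a+b+Nk$, and reducing modulo $N$ this is exactly the assertion that each residue class is represented precisely once by some $a+b$ with $a\in A$, $b\in B$, i.e.\ that $A\oplus B$ is a complete set of residues modulo $N$. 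For $(2)\Leftrightarrow(3)$ I would observe that the coefficient of $X^j$ in $A(X)B(X)\bmod(X^N-1)$ counts, with multiplicity, the number of pairs $(a,b)\in A\times B$ with $a+b\equiv j\pmod N$; since these coefficients are nonnegative integers and reduction modulo $X^N-1$ only superimposes nonnegative contributions, condition (2) --- every residue hit exactly once --- is equivalent to all $N$ coefficients being equal to $1$, which is precisely (3). This same observation yields the rider that the conditions force $A,B$ to be genuine sets: if $A$ (say) had an element of multiplicity at least $2$, some coefficient of the product would exceed $1$, contradicting (3); evaluating (3) at $X=1$ then gives $\#A\,\#B=A(1)B(1)=N$.

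For $(3)\Rightarrow(4)$, I would write $A(X)B(X)=\bra{1+X+\dotsb+X^{N-1}}+(X^N-1)C(X)$ with $C\in\ZZ[X]$. Setting $X=1$ gives $A(1)B(1)=N$ at once. For any divisor $t>1$ of $N$, the cyclotomic polynomial $\Phi_t$ divides both $X^N-1$ and $1+X+\dotsb+X^{N-1}=\tfrac{X^N-1}{X-1}$ (the latter because $t\neq1$), hence $\Phi_t\mid A(X)B(X)$; since $\Phi_t$ is irreducible, it must divide $A(X)$ or $B(X)$, which is (4). Conversely, for $(4)\Rightarrow(3)$, let $R(X)$ denote the reduction of $A(X)B(X)$ modulo $X^N-1$, a polynomial of degree less than $N$ with nonnegative integer coefficients and $R(1)=A(1)B(1)=N$. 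Each $\Phi_t$ with $t\mid N$ and $t>1$ divides $A(X)B(X)$ by (4) and divides $X^N-1$, so it divides $R(X)$; as these $\Phi_t$ are pairwise coprime, their product $\prod_{t\mid N,\,t>1}\Phi_t=1+X+\dotsb+X^{N-1}$ divides $R(X)$. This product has degree $N-1$ and $\deg R<N$, so $R(X)=c\,(1+X+\dotsb+X^{N-1})$ for a constant $c$, and $R(1)=N$ forces $c=1$, giving (3).

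The main obstacle I anticipate is not any single deep step but the careful bookkeeping throughout the $(3)\Leftrightarrow(4)$ passage: keeping the distinction between genuine polynomial identities and congruences modulo $X^N-1$, verifying that reduction preserves both the nonnegativity of the coefficients and the value at $X=1$, and invoking the irreducibility of $\Phi_t$ (via Gauss's lemma) to descend from divisibility of the product $A(X)B(X)$ to divisibility of a single factor over \ZZ$[X]$. The degree-counting argument in $(4)\Rightarrow(3)$, which pins $R$ down to $1+X+\dotsb+X^{N-1}$ using pairwise coprimality of the $\Phi_t$, is where I would be most careful to avoid gaps.
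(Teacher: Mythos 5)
Your proof is correct, and all four equivalences are handled soundly: the multiset-coefficient reading of $A(X)B(X)\bmod (X^N-1)$ for $(1)\Leftrightarrow(2)\Leftrightarrow(3)$, and the factorization $1+X+\dotsb+X^{N-1}=\prod_{t\mid N,\,t>1}\Phi_t(X)$ together with irreducibility of $\Phi_t$ (plus the degree count pinning down $R$) for $(3)\Leftrightarrow(4)$. Note that the paper itself gives no proof of this statement --- it is imported verbatim as Lemma 1.3 of Coven--Meyerowitz \cite{CM} --- and your argument is essentially the standard one found there, so there is nothing to reconcile.
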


\begin{lemma}[Lemma 2.1, \cite{CM}]\label{21}
 Let $A(X)$ and $B(X)$ be polynomials with coefficients 0 and 1, $N=A(1)B(1)$, and $S$ the set of prime power factors of $N$. If $\Phi_t(X)$ divides
$A(X)$ or $B(X)$ for every factor $t>1$ of $N$, then
\begin{enumerate}
 \item $A(1)=\prod_{s\in S_A}\Phi_s(1)$ and $B(1)=\prod_{s\in S_B}\Phi_s(1)$.
 \item $S_A$ and $S_B$ are disjoint sets whose union is $S$.
\end{enumerate}
\end{lemma}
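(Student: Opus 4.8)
The plan is to play two facts about evaluating cyclotomic polynomials at $1$ against each other: a divisibility coming from the hypothesis, and an identity coming from the factorization of $X^N-1$. Recall that $X^N-1=\prod_{t\mid N}\Phi_t(X)$, so dividing by $X-1=\Phi_1(X)$ gives $1+X+\dots+X^{N-1}=\prod_{t\mid N,\,t>1}\Phi_t(X)$. Evaluating at $X=1$, the left side equals $N$; on the right side $\Phi_t(1)=1$ whenever $t$ has at least two distinct prime factors, while $\Phi_{p^k}(1)=p$. Hence only the prime-power divisors, i.e.\ the elements of $S$, contribute, and I obtain the key identity $\prod_{s\in S}\Phi_s(1)=N$. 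This identity will be the backbone of the argument.

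Next I would dispose of the easy half of statement (2). Every $s\in S$ is a factor $>1$ of $N$, so by hypothesis $\Phi_s(X)$ divides $A(X)$ or $B(X)$, i.e.\ $s\in S_A\cup S_B$; thus $S=S_A\cup S_B$. Now set $P_A=\prod_{s\in S_A}\Phi_s(1)$ and $P_B=\prod_{s\in S_B}\Phi_s(1)$. Since the cyclotomic polynomials indexed by distinct elements of $S_A$ are pairwise coprime and each divides $A(X)$, their product divides $A(X)$ in $\ZZ[X]$; evaluating at $1$ (exactly as in Proposition \ref{lower}) gives $P_A\mid A(1)$, and likewise $P_B\mid B(1)$. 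Consequently $P_AP_B\mid A(1)B(1)=N$, so in particular $P_AP_B\le N$.

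The crux is then a squeeze. Using $S=S_A\cup S_B$ together with the elementary identity $\prod_{s\in S_A}\Phi_s(1)\cdot\prod_{s\in S_B}\Phi_s(1)=\prod_{s\in S_A\cup S_B}\Phi_s(1)\cdot\prod_{s\in S_A\cap S_B}\Phi_s(1)$, I can write $P_AP_B=N\cdot\prod_{s\in S_A\cap S_B}\Phi_s(1)$. Since every factor $\Phi_s(1)=p\ge2$ for a prime power $s$, the trailing product is $\ge1$, with equality if and only if $S_A\cap S_B=\varnothing$; hence $P_AP_B\ge N$. Combining with $P_AP_B\le N$ forces $P_AP_B=N$, which simultaneously yields both conclusions: the trailing product must equal $1$, so $S_A\cap S_B=\varnothing$ (completing (2)), and the divisibilities $P_A\mid A(1)$, $P_B\mid B(1)$ together with $P_AP_B=A(1)B(1)$ force $P_A=A(1)$ and $P_B=B(1)$ (giving (1)). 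I expect the only genuinely delicate point to be the correct bookkeeping of the evaluation facts $\Phi_{p^k}(1)=p$ versus $\Phi_m(1)=1$ for non-prime-power $m$, which is precisely what makes the identity $\prod_{s\in S}\Phi_s(1)=N$ hold; everything else is a short divisibility-versus-size squeeze.
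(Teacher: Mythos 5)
Your proposal is correct. Note that the paper itself gives no proof of this lemma --- it is quoted verbatim from Coven--Meyerowitz \cite{CM} --- so there is nothing internal to compare against; but your argument is sound and is essentially the standard one. The three ingredients all check out: the identity $N=\prod_{s\in S}\Phi_s(1)$ obtained by evaluating $1+X+\dots+X^{N-1}=\prod_{t\mid N,\,t>1}\Phi_t(X)$ at $X=1$ and discarding the non-prime-power factors; the divisibility $\prod_{s\in S_A}\Phi_s(1)\mid A(1)$ coming from pairwise coprimality of distinct cyclotomic polynomials (the same mechanism as Proposition \ref{lower} in the paper); and the squeeze $N\le P_AP_B\le N$ which simultaneously forces $S_A\cap S_B=\varnothing$ and $P_A=A(1)$, $P_B=B(1)$. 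This matches the original \cite{CM} argument in all essentials.
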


Now, assume that $A$ tiles $\ZZ_N$ by translations, and let $B$ be the complementary tile.
We may assume that $A\ssq\set{0,1,2,\dotsc,N-1}$, and also assume that $A$ tiles $\ZZ$ by translations; furthermore, this tiling has period $N$,
i.~e. $A\oplus(B\oplus N\ZZ)=\ZZ$. We warn the reader, that not only
do we have to prove that $A$, as a subset of $\ZZ$, is spectral, but also that the spectrum is a subset of $N^{-1}\ZZ$, in order to claim that $A$,
\emph{as a subset of $\ZZ_N$}, is spectral.

We see that conditions $(1)$ and $(2)$ of Lemma \ref{13} are satisfied, hence $(4)$ is satisfied as well, which is just the hypothesis of Lemma \ref{21}.
By $(2)$ of Lemma \ref{21}, we get that $S_A\ssq S$. Next, we will use the following two theorems:

\begin{thm}[Theorems B1 and B2, \cite{CM}]\label{CoMe}
 Let $A$ be a finite set of nonnegative integers with corresponding polynomial $A(x)=\sum_{a\in A}x^a$. If $A$ tiles the integers, and $\#{A}$ is divided
 by at most two primes, then $A$ satisfies {\bf (T1)} and {\bf (T2)}.
\end{thm}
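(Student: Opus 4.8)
The plan is to dispatch \textbf{(T1)} for free and to reduce \textbf{(T2)} to a single two-variable divisibility, which is where the real work — and the hypothesis of at most two primes — lies. Set the tiling up as in the paragraph preceding the statement: take $A\ssq\set{0,1,\dotsc,N-1}$ with a period-$N$ tiling $A\oplus(B\oplus N\ZZ)=\ZZ$ and $N=A(1)B(1)$. By Lemma \ref{13}, condition $(1)$ forces condition $(4)$, so for every factor $t>1$ of $N$ the cyclotomic polynomial $\Phi_t(X)$ divides $A(X)$ or $B(X)$. This is exactly the hypothesis of Lemma \ref{21}, whose part $(1)$ reads $A(1)=\prod_{s\in S_A}\Phi_s(1)$ — precisely \textbf{(T1)}. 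Thus \textbf{(T1)} is immediate once Lemmas \ref{13} and \ref{21} are in hand.

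Next I would reduce \textbf{(T2)}. By Lemma \ref{21}$(2)$ the set $S$ of prime powers dividing $N$ splits as $S=S_A\sqcup S_B$. Since every $s=p^i\in S_A$ contributes a factor $\Phi_{p^i}(1)=p$ to the product $\prod_{s\in S_A}\Phi_s(1)=A(1)=\#A$, the primes occurring as bases of elements of $S_A$ are exactly the prime divisors of $\#A$, of which there are at most two, say $p$ and $q$. Consequently any family of elements of $S_A$ that are powers of pairwise distinct primes has at most two members; a single prime power is the defining property of $S_A$, so \textbf{(T2)} reduces to showing that $\Phi_{p^aq^b}(X)\mid A(X)$ for every pair $p^a,q^b\in S_A$.

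For this core claim I would first pass to the cyclic group $\ZZ_d$ with $d=p^aq^b$. Reducing the identity of Lemma \ref{13}$(3)$ modulo $X^d-1$ turns $A\oplus B=\ZZ_N$ into a multiplicity-$(N/d)$ covering, $A(X)B(X)\equiv\tfrac{N}{d}(1+X+\dotsb+X^{d-1})\bmod X^d-1$; since $d\mid N$, the value $A(\zeta)$ at any $d$-th root of unity $\zeta$ is unchanged by this reduction, so divisibility by $\Phi_{p^i}$, $\Phi_{q^j}$ and $\Phi_{p^aq^b}$ is preserved and the goal is untouched. We therefore know $\Phi_{p^a}\mid A$ and $\Phi_{q^b}\mid A$, and because $\Phi_{p^aq^b}$ is irreducible and divides the right-hand side above it divides $A$ or $B$; I would assume it divides $B$ and seek a contradiction. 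Here the two-prime structure enters: $B(\zeta)=0$ for $\zeta$ a primitive $p^aq^b$-th root of unity is a vanishing sum of roots of unity of order $p^aq^b$, so by the Lam--Leung theorem \cite{LL} (as packaged in Lemma \ref{pqcycles}) the cycle structures of $B$ and, through $\Phi_{p^a}\mid A$ and $\Phi_{q^b}\mid A$, of $A$ are tightly constrained by $p$-cycles and $q$-cycles; feeding these decompositions together with the size bounds of Proposition \ref{lower} into the identity $\#A\cdot\#B=d$ should yield the contradiction.

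The main obstacle is precisely this last combinatorial step: converting the $p$-cycle/$q$-cycle decompositions of $A$ and $B$ in $\ZZ_{p^aq^b}$ into a numerical impossibility. This is exactly the point at which the hypothesis of at most two primes is indispensable: for three or more primes the Lam--Leung decomposition into prime cycles fails — as witnessed by the $pqr$ vanishing sum displayed in the introduction — so the mixed cyclotomic factor is no longer forced onto $A$, and \textbf{(T2)} cannot be reached this way. A secondary, more bookkeeping, difficulty is that the projection to $\ZZ_d$ replaces the honest $0/1$ sets by multisets; one must either carry out the cycle argument directly with multisets, or instead induct on $a+b$ via a reduction lemma that peels off a top prime-power factor while staying within genuine tilings.
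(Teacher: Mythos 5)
You should first note that the paper does not prove this statement at all: it is imported verbatim as Theorems B1 and B2 of Coven--Meyerowitz \cite{CM}, so the only ``proof'' in the paper is the citation, and any comparison has to be against the argument in \cite{CM} itself. Your \textbf{(T1)} part is complete and is exactly the standard argument: tiling gives condition $(4)$ of Lemma \ref{13}, which is the hypothesis of Lemma \ref{21}, whose part $(1)$ is \textbf{(T1)}. Your reduction of \textbf{(T2)} is also sound: by Lemma \ref{21}$(1)$ the base primes of $S_A$ are precisely the prime divisors of $\#A$, so under the hypothesis everything comes down to showing $\Phi_{p^aq^b}(X)\mid A(X)$ whenever $p^a,q^b\in S_A$ with $p\neq q$ (and the dichotomy ``$\Phi_{p^aq^b}$ divides $A$ or $B$'' is already part $(4)$ of Lemma \ref{13}; the reduction modulo $X^d-1$ is not needed for that).

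The genuine gap is that the crux of Theorem B2 --- ruling out the alternative $\Phi_{p^aq^b}(X)\mid B(X)$ --- is never carried out. You correctly identify the relevant tool (the de Bruijn/Lam--Leung structure of vanishing sums of roots of unity of order $p^aq^b$, i.e.\ Lemma \ref{pqcycles}), but the sentence ``feeding these decompositions \dots\ into the identity $\#A\cdot\#B=d$ should yield the contradiction'' is not an argument, and you acknowledge as much. Note also that the identity is $\#A\cdot\#B=N$, not $d$; after reduction modulo $X^d-1$ the relevant multisets have total multiplicities $\#A$ and $\#B$ with product $N=(N/d)\cdot d$, and it is not clear what numerical impossibility is supposed to emerge from comparing cycle counts alone. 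This step cannot be dismissed as routine: for general $0$--$1$ polynomials, $\Phi_{p^a}\mid A$ and $\Phi_{q^b}\mid A$ do \emph{not} force $\Phi_{p^aq^b}\mid A$, so the tiling hypothesis must be exploited in a substantive way, and in \cite{CM} the proof of B2 is a genuine induction on the prime-power structure of the tiling (several lemmas long), not a single counting step. As it stands, your proposal establishes B1 but only restates, rather than proves, B2.
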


\begin{thm}[Theorem 1.5(i), \cite{Laba}]\label{La}
 If $A\ssq\ZZ$ satisfies {\bf (T1)} and {\bf (T2)}, then $A$ has a spectrum.
\end{thm}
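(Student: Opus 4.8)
The plan is to construct a spectrum for $A$ explicitly. Fix an integer $N$ divisible by every $s\in S_A$ (for instance $N=\prod_p p^{n_p}$, where $n_p$ is the largest exponent with $p^{n_p}\in S_A$), and set $\om=e^{2\pi i/N}$. The set of exponentials $x\mapsto e^{2\pi i bx/N}$, $b\in B$, is orthogonal on $A$ exactly when $A(\om^{b-b'})=0$ for all distinct $b,b'\in B$; and since $\om^{b-b'}$ is a primitive $d$-th root of unity for $d=N/\gcd(b-b',N)$, this holds precisely when $\Phi_d(X)\mid A(X)$. Thus it suffices to produce $B\ssq\ZZ_N$ with $\#B=\#A$ such that $\Phi_{N/\gcd(b-b',N)}(X)\mid A(X)$ for every pair of distinct $b,b'\in B$; because the frequencies are $b/N$, the resulting spectrum lies in $N^{-1}\ZZ$, which is exactly what is needed to regard $A$ as a spectral subset of $\ZZ_N$.

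First I would pin down the cardinality. Since $\Phi_{p^a}(1)=p$, condition \textbf{(T1)} gives $\#A=\prod_{s\in S_A}\Phi_s(1)=\prod_{s\in S_A}p(s)$, where $p(s)$ denotes the prime of which $s$ is a power. Grouping $S_A$ by primes, for each prime $p$ let the powers of $p$ in $S_A$ be $p^{a_1}<p^{a_2}<\dotsb<p^{a_{k_p}}$ (with each $a_i\le n_p$), so that $\#A=\prod_p p^{k_p}$.

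The heart of the argument is a local construction at each prime followed by a Chinese Remainder assembly. At the prime $p$, put
\[B^{(p)}=\set{\sum_{i=1}^{k_p}c_i\,p^{\,n_p-a_i}\ :\ 0\le c_i\le p-1}\ssq\ZZ_{p^{n_p}}.\]
Because the exponents $n_p-a_1>n_p-a_2>\dotsb>n_p-a_{k_p}\ge0$ are distinct, distinct choices of $(c_i)$ yield distinct integers in $[0,p^{n_p})$, so $\#B^{(p)}=p^{k_p}$; moreover for distinct elements of $B^{(p)}$ the difference $\sum_i(c_i-c_i')p^{\,n_p-a_i}$ has $p$-adic valuation $n_p-a_j$, where $j$ is the largest index with $c_j\neq c_j'$ (the term $i=j$ contributes valuation exactly $n_p-a_j$ since $p\nmid(c_j-c_j')$, and all terms $i<j$ contribute strictly larger valuation). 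Hence $\gcd(b-b',p^{n_p})=p^{\,n_p-a_j}$, so this $p$-local difference has order $p^{a_j}\in S_A$. Defining $B\ssq\ZZ_N$ as the image of $\prod_p B^{(p)}$ under the isomorphism $\ZZ_N\cong\prod_p\ZZ_{p^{n_p}}$ gives $\#B=\prod_p p^{k_p}=\#A$.

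It remains to verify orthogonality, and this is where I expect the real content to lie. For distinct $b,b'\in B$ with local components $(m_p)_p$, at least one $m_p$ is nonzero, and the local computation shows $N/\gcd(b-b',N)=\prod_{p:\,m_p\neq0}p^{a_{j_p}}$, a product of prime powers of distinct primes, each of which lies in $S_A$. This is exactly the configuration controlled by \textbf{(T2)}, which guarantees that $\Phi_d(X)\mid A(X)$ for $d=N/\gcd(b-b',N)$, hence $A(\om^{b-b'})=0$ and $B$ is a spectrum. The main obstacle is therefore to engineer the construction so that \emph{every} pairwise difference has order equal to such a product of prime powers drawn from $S_A$, never a divisor that \textbf{(T2)} cannot reach; \textbf{(T2)} is the only hypothesis strong enough to certify the vanishing of $A$ at these ``mixed'' roots of unity, whereas \textbf{(T1)} only fixes the count $\#B=\#A$. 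The cardinality bookkeeping and the distinctness of the local sets are routine by comparison.
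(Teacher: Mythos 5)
Your construction is correct and is essentially the one the paper relies on: the paper does not prove this theorem itself but cites {\L}aba, and the Remark following the statement describes exactly the explicit spectrum $\bigl\{\sum_{s\in S_A}k_s/s\bigr\}$, which coincides (up to units in each $p$-adic component) with your $B/N$. Your verification --- distinctness via base-$p$ digits, the valuation computation showing each difference has order a product of at most one element of $S_A$ per prime, \textbf{(T2)} certifying the cyclotomic divisibility, and \textbf{(T1)} supplying the count --- is the same argument as in Section~2 of \cite{Laba}.
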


\begin{rem}
The important part of the proof, is that the spectrum is explicitly constructed with respect to $S_A$, namely, the set of all
\[\sum_{s\in S_A}\frac{k_s}{s},\]
where $k_s\in\set{0,1,\dotsc,p-1}$, $s\in S_A$ and $s=p^{\al}$, is proven to be a spectrum of $A$, when it satisfies {\bf (T1)} and {\bf (T2)} (see the beginning of Section 2 \cite{Laba}).
\end{rem}

So, if $A\ssq\set{0,1,\dotsc,N-1}$ tiles $\ZZ$ by translations, the tiling having period $N$, which has at most two prime divisors, then $A$ satisfies (T1)
and (T2) by Theorem \ref{CoMe}, as $\#{A}$ divides $N$. Next, by Theorem \ref{La} we get that $A$ is spectral; by Lemma \ref{21} and the Remark above, we get
that the spectrum belongs to $N^{-1}\ZZ$, hence $A\ssq\ZZ_N$ is spectral, completing the proof.

\bigskip
\bigskip
\section{Spectral$\Rightarrow$Tile, $N=p^n$}\label{ST0}

\bigskip
\bigskip

Suppose $\Lambda \subseteq \ZZ_N$ is a spectrum of $A$ and let $p^{\nu_1}, \ldots, p^{\nu_k}$
be the divisors $d$ of $N$ such that
\[
\Phi_d(X) \mid A(X) = \sum_{a \in A} X^a.
\]
We have
\[
\Phi_{p^{\nu_i}}(X) = 1+X^{p^{\nu_i-1}}+X^{2p^{\nu_i-1}}+\cdots+X^{(p-1)p^{\nu_i-1}}.
\]
Write $E_\nu = \{0, p^{\nu-1}, 2p^{\nu-1}, \ldots, (p-1)p^{\nu-1}\}$ so that
$E_{\nu_i}(X) = \Phi_{p^{\nu_i}}(X)$. Define next the set
\begin{equation}\label{sum-of-e}
E = E_{\nu_1} + E_{\nu_2} + \cdots E_{\nu_k},
\end{equation}
and notice the sum is direct as $e_1+\cdots+e_k \in E$ is determines
$e_i \in E_{\nu_i}$ from the ${\nu_i}$-th digit in its expansion to base $p$.
This observation implies that $|E| = p^k$.

Notice also that $A(X)$ and $E(X)$ have the same zeros at the $N$-th roots of unity.
This implies that $\Lambda$ is also orthogonal on $E$ as this is determined by the zeros
of $E(X)$ at the $N$-th roots of unity.
From the orthogonality we obtain
\[
|A| = |\Lambda| \le |E| = p^k.
\]
Let $B \subseteq \ZZ_N$ be the sum of those $E_\nu$, $\nu=1,2,\ldots,n$, not appearing in \eqref{sum-of-e}.
This sum is again direct, as with the sum \eqref{sum-of-e}, so we obtain
\[
|B| = p^{n-k}.
\]
It follows that $A(X)B(X)$ vanishes on all $N$-th roots of unity except 1, which implies that
$A+B$ is a tiling of $\ZZ_N$ at some level $\ell$.
Then
\[
\ell p^n = |A| \cdot |B| = p^k p^{n-k} = p^n,
\]
so that $\ell=1$ and $A+B$ is a tiling of $\ZZ_N$ at level 1.

\bigskip
\bigskip
\section{Spectral$\Rightarrow$Tile, $N=p^nq$}\label{ST}
\bigskip
\bigskip

In this Section we prove Theorem \ref{mainthm}. As mentioned in the Introduction, our method is to establish a contradiction to the assumption of a spectral set not satisfying {\bf (T1)}
and {\bf (T2)}. When one of these properties fails, the spectrality of $A$ induces the existence of many roots of the mask polynomial $A(X)$ of the form
\begin{equation}\label{succroots}
A(\om^{d_1})=\dotsb=A(\om^{d_k})=0,
\end{equation}
where $d_1\mid\dotsb\mid d_k\mid N$, which further gives
\begin{equation}\label{cyclodiv}
\Phi_{N/d_1}(1)\dotsm\Phi_{N/d_k}(1)\mid \#A 
\end{equation}
and this produces a strong lower bound for $\#A$ via the vanishing sums of roots of unity which eventually leads to a contradiction. Obtaining such lower bounds were
a lot easier for $N=p^n$, as \eqref{cyclodiv} would yield $p^k\mid\#A$. This follows from the fact that $\Phi_{p^s}(1)=p$, for a prime $p$, whereas $\Phi_M(1)=1$, when $M$ is not a prime power, and this 
explains the added complexity of this proof, especially when the fractions $N/d_i$ are not prime powers. Lemma \ref{pqcycles} is the main tool here; although it does not produce some divisibility 
condition such as \eqref{cyclodiv}, it gives some good lower bounds in terms of $k$ when \eqref{succroots} holds. See for example, the proofs of the Claims \ref{firstbound} and \ref{secondbound} below.

We distinguish two cases, depending on whether $q$ belongs to $D$.

\bigskip\noindent
\framebox{$q\in D$}
We have $\#A\leq q$ from Proposition \ref{D0}. Furthermore, the property {\bf (T2)} holds vacuously, so we only need to prove {\bf (T1)} due to
Theorem \ref{t1t2}. If $\#A=1$, $A(X)$ is a monomial and has no root of the form $\om^d$, in particular, $A(\om^{p^n})\neq0$ and {\bf (T1)} holds. 

If $\#A>1$, and $A$ is spectral, then $B-B\neq\set{0}$ for a spectrum $B$, so $A(\om^d)$ must vanish somewhere. Since
$q\in D$, there must be some nonnegative $a\leq n$ such that $A(\om^{p^a})=0$, so that
\[A(X^{p^a})\equiv P(X^{p^a})\Phi_p(X^{N/p})+Q(X^{p^a})\Phi_q(X^{N/q})\mod X^N-1,\]
by Lemma \ref{pqcycles}. $q\in D$ yields $A(\om^{p^aq})\neq0$, hence
by Proposition \ref{PQ} we get $Q\not\equiv0$, so that $\#A=A(1)=pP(1)+qQ(1)\geq q$, leading to $P\equiv0$ and $Q(1)=1$. 
This certainly implies that $A(\om^{p^n})=0$, $A(1)=q$, hence {\bf (T1)} holds in any case where $A$ is spectral and $q\in D$. By Theorem \ref{t1t2}, $A$ tiles $\ZZ_N$ by translations.


\bigskip\noindent
\framebox{$q\notin D$}
The size of a spectral set $A\ssq\ZZ_N$ depends on the number of roots of $A(X)$ of the form $\om^{p^aq}$.

\begin{defn}
 The numbers $0\leq a_1<\dotsb<a_m\leq n-1$ are such that $\set{a: A(\om^{p^aq})=0}=\set{a_1,\dotsc,a_m}$. If no such root exists, we simply put $m=0$.
 Also, if $A$ is spectral and $B$ a spectrum of $A$, we denote $B_i=\set{b\in B:b\equiv i\bmod q}$. The $p$-adic expansion of
 the least nonnegative residue $\bmod p^n$ of $b\in B$ has the form
 \[b\equiv b_0+b_1p+\dotsb+b_{n-1}p^{n-1}\bmod p^n,\]
 where $0\leq b_i\leq p-1$ for all $i$. We say that the $p$-adic expansions of $b$ and $b'$ coincide at $a_1,\dotsc,a_m$, if $b_{a_i}=b'_{a_i}$ for $1\leq i\leq m$, that is,
 they have the same $p$-adic digits at those places. Finally, $p^i\pdiv b$ exactly when $b_i$ is the smallest nonzero $p$-adic digit of $b$.
\end{defn}

We have seen in the previous subsection that Spectral$\Rightarrow$Tile holds when $m=0$. By induction, we may assume that Spectral$\Rightarrow$Tile holds for all
nonnegative integers up to $m-1$, for some $m>0$, where $\set{a: A(\om^{p^aq})=0}=\set{a_1,\dotsc,a_m}$, exactly as in the Definition above.

\begin{prop}\label{main}
 Suppose that $A\ssq\ZZ_N$ is spectral and $\#A>p^m$. Then
 \begin{enumerate}
  \item $\#A=p^mq$.
  \item $\#B_i=p^m$ for all $i$, and the elements of $B_i$ have all the possible $p$-adic expansions at $a_1,\dotsc,a_m$, each appearing exactly once.
  \item For every $i,j$ and every $b\in B$, there is $b'$ such that $p^{a_j}\pdiv b-b'$.
  \item There is some $a\neq a_i$ for all $i$, such that $b-b'\in p^a\ZZ_N^{\star}$ for some $b,b'\in B$, and $A(\om^{p^a})=0$.
  \item If $a>a_m$, then $A(\om^{p^{a_j}})=0$ for all $j$, as well as $A(\om^{p^{a'}})=0$ for all $a'\geq a$; in particular, $A(\om^{p^n})=0$, or equivalently, $p^n\notin D$.
  \item If $a<a_m$, then $A(X^{p^{a_m}q})\equiv p^{m-1}qX^k\Php \bmod X^N-1$, for some $k$.
 \end{enumerate}
\end{prop}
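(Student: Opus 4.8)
The plan is to pass to the CRT decomposition $\ZZ_N\cong\ZZ_{p^n}\times\ZZ_q$, writing each $b\in B$ as a pair $(\beta,i)$ with $\beta\in\ZZ_{p^n}$ and $i\in\ZZ_q$, so that $B_i$ collects the elements of $q$-coordinate $i$. First I would record the basic constraint from Theorem~\ref{diff}: if $b,b'\in B_i$ are distinct then $q\mid(b-b')$, so $\gcd(b-b',N)=p^cq$ with $c<n$, and since $b-b'\in(B-B)\setminus\{0\}$ we get $A(\om^{p^cq})=0$, forcing $c\in\{a_1,\dots,a_m\}$. Thus the lowest $p$-adic digit at which two elements of the same class differ always lies among $a_1,\dots,a_m$, so the \emph{pattern map} $b\mapsto(b_{a_1},\dots,b_{a_m})$ is injective on each $B_i$; this yields $\#B_i\le p^m$ and hence $\#A=\#B=\sum_i\#B_i\le p^mq$. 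On the other side each hypothesis $A(\om^{p^{a_j}q})=0$ means $\Phi_{p^{n-a_j}}(X)\mid A(X)$, so Proposition~\ref{lower} gives $p^m\mid\#A$.

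Next I would prove (4). Suppose for contradiction that every cross-class difference (i.e.\ $q\nmid(b-b')$) had $\gcd(b-b',N)=p^a$ with $a\in\{a_1,\dots,a_m\}$; in particular no two elements of $B$ could share a $\ZZ_{p^n}$-coordinate, since that would produce a cross-class difference of valuation $n\notin\{a_i\}$. Combined with the same-class analysis, \emph{every} pair of distinct elements of $B$ would then differ first at a position in $\{a_1,\dots,a_m\}$, making the pattern map injective on all of $B$ and forcing $\#B\le p^m$, contradicting $\#A>p^m$. Hence some pair $b,b'$ has difference of $p$-adic valuation $a\notin\{a_1,\dots,a_m\}$; as same-class differences always have valuation in $\{a_i\}$, this pair is cross-class, so $\gcd(b-b',N)=p^a$ and $A(\om^{p^a})=0$. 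Since $a\neq a_m$, exactly one of $a>a_m$, $a<a_m$ holds, which sets up (5) and (6).

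For the two regimes I would feed $A(\om^{p^a})=0$ into Lemma~\ref{pqcycles}, writing $A(X^{p^a})\equiv P(X^{p^a})\Phi_p(X^{N/p})+Q(X^{p^a})\Phi_q(X^{N/q})$; since $a\notin\{a_i\}$ we have $A(\om^{p^aq})\neq0$, so Proposition~\ref{PQ} forces $Q\not\equiv0$, i.e.\ $p^a\cdot A$ really contains $q$-cycles. In the regime $a>a_m$ (part 5) I would argue that the $p$-cycle part cannot survive, so that $p^a\cdot A$ is a union of $q$-cycles; since multiplication by a further power of $p$ sends $q$-cycles to $q$-cycles, this propagates the vanishing to $A(\om^{p^{a'}})=0$ for all $a'\ge a$, in particular $A(\om^{p^n})=0$, i.e.\ $\Phi_q\mid A$ and $p^n\notin D$; the remaining divisibilities $A(\om^{p^{a_j}})=0$ then follow by combining $\Phi_q\mid A$ with $\Phi_{p^{n-a_j}}\mid A$ through the same nonnegative cycle bookkeeping, which is exactly the \textbf{(T2)}-type statement. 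In the regime $a<a_m$ (part 6) I would instead push $A(\om^{p^a})=0$ up to level $p^{a_m}q$: applying the multiplier $p^{a_m-a}q$ collapses each $p$-cycle to a point of multiplicity $p$ and each $q$-cycle to a point of multiplicity $q$, while Lemma~\ref{pqcycles} at $p^{a_m}q$ (where $N/(p^{a_m}q)=p^{n-a_m}$ is a prime power) says the outcome is a union of $p$-cycles; reconciling the two descriptions via the uniqueness statement of Lemma~\ref{combo} pins $p^{a_m}q\cdot A$ down to $p^{m-1}q$ copies of a single $p$-cycle, giving $A(X^{p^{a_m}q})\equiv p^{m-1}qX^k\Php$. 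Evaluating at $X=1$ yields $\#A=p^mq$ here, while in the regime $a>a_m$ the relation $\Phi_q\mid A$ gives $q\mid\#A$, and with $p^m\mid\#A\le p^mq$ this again forces $\#A=p^mq$; this establishes (1).

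With $\#A=p^mq$ in hand, (2) is immediate: the $q$ classes each satisfy $\#B_i\le p^m$ and sum to $p^mq$, so every class has exactly $p^m$ elements, and, the pattern map being an injection into a set of size $p^m$, each pattern at $a_1,\dots,a_m$ is realized exactly once. For (3), given $b$ and $j$, fullness of $b$'s class provides a $b'$ in that class agreeing with $b$ at $a_1,\dots,a_{j-1}$ and differing at $a_j$; the same-class constraint forces $b,b'$ to agree at \emph{all} positions below $a_j$, so the lowest differing digit is exactly $a_j$, i.e.\ $p^{a_j}\pdiv(b-b')$. The main obstacle is the rigidity in (5) and (6): excluding surviving $p$-cycles (equivalently $P\equiv0$) when $a>a_m$, and extracting the \emph{exact} multiplicity $p^{m-1}q$ and the single cycle when $a<a_m$. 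Both rest on the nonnegativity supplied by Lemma~\ref{pqcycles} together with the near-uniqueness of representations in Lemma~\ref{combo}, and it is here that the structure of vanishing sums of roots of unity, and possibly the inductive hypothesis on smaller values of $m$, must do the real work.
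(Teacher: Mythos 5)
Your combinatorial skeleton coincides with the paper's: the pigeonhole argument on $p$-adic digit patterns within each residue class mod $q$ gives (4) and the bound $\#B_i\le p^m$, and your derivations of (2) and (3) from (1) are exactly the paper's. The genuine gap is in the quantitative core --- the two rigidity statements you yourself flag as where ``the real work'' lies: that the $p$-cycle part dies ($P\equiv0$) when $a>a_m$, and that $A(X^{p^{a_m}q})$ is exactly $p^{m-1}q$ times a single $p$-cycle when $a<a_m$. Neither follows from the nonnegativity of Lemma \ref{pqcycles} together with Lemma \ref{combo}. In particular, your reconciliation at level $p^{a_m}q$ uses only the decomposition coming from the single level $p^a$ plus the fact that $p^{a_m}q\cdot A$ is a union of $p$-cycles; that information is compatible with, say, several distinct $p$-cycles carrying different multiplicities, and nothing in it produces the factor $p^{m-1}$. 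The paper's mechanism is an iterated coefficient-growth argument (Claims \ref{firstbound} and \ref{secondbound}): at each level $n_j=p^{a_j}q$ one has $A(X^{n_j})\equiv P_{n_j}(X^{n_j})\Php$, and comparing consecutive levels shows $\Php$ divides $P_{n_j}(X^{n_{j+1}})$, so every coefficient of $A(X^{p^aq})$ is divisible by $p^i$ (with $i$ the number of $a_j<a$); since $A(\om^{p^aq})\neq0$ some $p$-cycle carries unequal multiplicities, which by the decomposition at level $p^a$ differ by a multiple of $q$, whence $\normoo{A(X^{p^aq})}\ge p^iq$; a further factor $p$ is then gained at each subsequent level $n_{j+1}$. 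It is this bootstrapping through \emph{all} the intermediate levels $n_1,\dotsc,n_m$, played against the upper bound $\#A\le p^mq$, that collapses $A(X^{p^aq})$ to a single monomial when $a>a_m$ and pins down the exact form in (6) when $a<a_m$. Your proposal never engages these intermediate levels, so the key inequalities are missing.

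A second, related defect: in the regime $a>a_m$ you deduce $A(\om^{p^{a_j}})=0$ from $\Phi_q\mid A$ and $\Phi_{p^{n-a_j}}\mid A$ ``through the same nonnegative cycle bookkeeping, which is exactly the \textbf{(T2)}-type statement.'' That inference \emph{is} property {\bf (T2)} for the pair $\{q,p^{n-a_j}\}$, which is precisely what the whole section is struggling to establish for spectral sets; it does not hold for free, so as stated this step is circular. (In the paper the $a'\ge a$ assertions of (5) come out of the collapsed monomial form of $A(X^{p^aq})$ and the resulting pure $q$-cycle structure.) Your alternative route to (1) in that regime --- $q\mid\#A$, $p^m\mid\#A$ via Proposition \ref{lower}, and $\#A\le p^mq$ --- is a clean shortcut, but it is conditional on the unproven claim $P\equiv0$.
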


\begin{proof}
 As $\#B>p^m$, there are at least two
distinct elements of $B$, say $b,b'$, that have the same digits at places $a_1,a_2,\dotsc,a_m$. If $q\mid b-b'$, then $b-b'\notin \bigcup_{1\leq i\leq m}p^{a_i}q\ZZ_N^{\star}$, 
contradicting the fact that $A(X)$ has exactly $m$ roots of the form $\om^{p^{a}q}$, due to Theorem \ref{diff}. Hence $q\nmid b-b'$, and $b-b'\in p^a\ZZ_N^{\star}$,
so that $A(\om^{p^a})=0$ by Theorem \ref{diff}, where $a$ is the
first place where the $p$-adic expansions of $b$ and $b'$ differ, therefore $a\neq a_i$ for all $i$, proving (4).

With the same argument we can show that $\#B_i\leq p^m$ for all $i$; in this case any two elements $b,b'\in B_i$ satisfy $q\mid b-b'$, so by Theorem \ref{diff} and the hypothesis,
they must have at least one different digit at places $a_1,\dotsc,a_m$, yielding $\#B_i\leq p^m$, and $\#B=\#A\leq p^mq$. For convenience, put $n_i=p^{a_i}q$ and $d=p^a$.
By Lemma \ref{pqcycles} we get
\begin{equation}\label{axd}
 A(X^d)\equiv P_d(X^d)\Phi_p(X^{N/p})+Q_d(X^d)\Phi_q(X^{N/q})\bmod X^N-1,
 \end{equation}
where $Q_d\not\equiv0$ due to Proposition \ref{PQ}, as $A(\om^{p^aq})\neq0$.

Now, consider the largest index $i$, such that $a>a_i$, assuming first that there is such an index, i.~e. $a>a_1$; otherwise, we put $i=0$. Put $u=dq=p^aq$, and denote by $\norm{A(X)}_{\infty}$
the largest coefficient of $A(X)$ in $R=\ZZ[X]/(X^N-1)$ written as a linear combination of $1,X,\dotsc,X^{N-1}$.

\begin{claim}\label{firstbound}
 $\norm{A(X^u)}_{\infty}\geq p^iq$.
\end{claim}
\begin{proof}[Proof of Claim]
 From \eqref{axd} we obtain 
 \begin{equation}\label{axn1}
  A(X^u)\equiv P_d(X^u)\Phi_p(X^{N/p})+qQ_d(X^u)\bmod X^N-1.
  \end{equation}
 If $i=0$, then from $Q_d\not\equiv0$ we get that some coefficient of $A(X^u)$ is at least as large as $q$, as desired. Suppose that $i>0$. By repeated application of
 Lemma \ref{pqcycles}, we have
 \[A(X^{n_j})\equiv P_{n_j}(X^{n_j})\Php,\]
 for all $j$. For $j=1$, if we replace $X$ by $X^{n_2/n_1}$, we also get $A(X^{n_2})\equiv pP_{n_1}(X^{n_2})$, and comparing this with $A(X^{n_2})\equiv P_{n_2}(X^{n_2})\Php$,
 we deduce that $\Php$ divides $P_{n_1}(X^{n_2})$, thus obtaining 
 \[A(X^{n_2})\equiv pP_{n_1}^1(X^{n_2})\Php,\]
 for some polynomial $P_{n_1}^1$ with positive integer
 coefficients. Proceeding inductively, we can get
 \[A(X^{n_i})\equiv p^{i-1}P_{n_1}^{i-1}(X^{n_i})\Php,\]
 hence
 \begin{equation}\label{axn2}
 A(X^u)\equiv p^iP_{n_1}^{i-1}(X^u),
 \end{equation}
 since $a_i<a<a_{i+1}$, where $P_{n_1}^{i-1}$ has positive integer coefficients. Comparing \eqref{axn1} and \eqref{axn2}, we get that $p^i$ divides all coefficients of $A(X^u)$.
 Furthermore, since $A(\om^u)\neq0$, we deduce that there is at least a $p$-cycle on which the elements of $n_m\cdot A$ do not have the same multiplicity; using \eqref{axn1} again,
 we deduce that two such multiplicities must differ by a multiple of $q$. Therefore, by \eqref{axn2} we conclude that their difference is a multiple of $p^iq$, and since they are both
 nonnegative and distinct, we finally get $\norm{A(X^u)}_{\infty}\geq p^iq$.
\end{proof}

If $a>a_m$ the claim yields $\normoo{A(X^u)}\geq p^mq$, while on the other hand $A(1)\leq p^mq$, therefore, $A(X^u)\equiv p^mq X^{uk}$, for some $k$. Then, \eqref{axn1}
yields $P_d\equiv0$, so for all $a'\geq a$ \eqref{axn1} gives
\[A(X^{p^{a'}})\equiv Q_d(X^{p^{a'}})\Phi_q(X^{N/q})\bmod X^N-1\]
establishing (5) (and (1) when $a>a_m$).

For the rest of the proof, we suppose $a<a_m$, and we will estimate $\norm{A(X^{n_m})}_{\infty}$.

\begin{claim}\label{secondbound}
 $\normoo{A(X^{n_{j+1}})}\geq p\normoo{A(X^{n_j})}$.
\end{claim}
\begin{proof}[Proof of Claim]
 Since $A(\om^{n_j})=0$, we obtain from Lemma \ref{pqcycles} $A(X^{n_j})\equiv P_{n_j}(X^{n_j})\Php$. The largest coefficient of $A(X^{n_j})$ would appear in a $p$-cycle, hence
 $\normoo{A(X^{n_j+1})}\geq p\normoo{A(X^{n_j})}$, and finally, $\normoo{A(X^{n_{j+1}})}\geq\normoo{A(X^{n_j+1})}\geq p\normoo{A(X^{n_j})}$, as desired.
\end{proof}

Applying Claims \ref{firstbound} and \ref{secondbound}, we obtain $\normoo{A(X^{n_m})}\geq p^{m-1}q$. The largest coefficient of $A(X^{n_m})$ would appear on a $p$-cycle, since $A(\om^{n_m})=0$, so we would
get $\#A=A(1)\geq p^mq$; but we have already shown that $\#A\leq p^mq$, so $A(1)=p^mq$ and $A(X^{n_m})\equiv p^{m-1}qX^k\Php$ thus proving (6) and (1) in all cases. 
(2) follows immediately from (1), as $\#B=p^mq$, hence $\#B_i=p^m$ for 
all $i$. Finally, (3) is a direct consequence of (2); let $i$ and $b\in B_i$ be arbitrary. For every $j$, there is some $b'\in B_i$ whose $p$-adic expansion is the same on
$a_1,\dotsc,a_{j-1}$ but differs on $a_j$, due to (2). Then, $b-b'\in p^cq\ZZ_N^{\star}$, for some $c\leq a_j$. By Theorem \ref{diff} and the hypothesis, $c=a_l$, for some $l$, but 
$c=a_l$ cannot hold for $l<j$, thus $l=j$, completing the proof.
\end{proof}

Next, we will make use of the induction assumption.

\begin{prop}\label{partition}
 Let $A\ssq\ZZ_N$ spectral and $\set{a: A(\om^{p^aq})=0}=\set{a_1,\dotsc,a_m}$. Define a partition of $A$ into sets $A_0,A_1,\dotsc,A_{p-1}$ such that
 \[a\in A_j\Longleftrightarrow p^{a_m}qa\in [\frac{N}{p}j,\frac{N}{p}(j+1)).\]
 Then, all $A_j$ have the same cardinality, and if $d\mid p^{a_m-1}q$ (assuming $a_m>1$) with $A(\om^d)=0$, then $A_j(\om^d)=0$ for all $j$. Furthermore, $A_j(\om^{p^{a_m}q})\neq0$.
\end{prop}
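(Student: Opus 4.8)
The plan is to treat the three assertions in turn, after first unwinding the combinatorial meaning of the partition. If $a$ has $p$-adic expansion $a\equiv x_0+x_1p+\dotsb+x_{n-1}p^{n-1}\bmod p^n$, then $p^{a_m}q\,a\equiv q\,(p^{a_m}a\bmod p^n)\equiv q\sum_{t=0}^{n-1-a_m}x_tp^{t+a_m}\bmod N$, whose coefficient of $N/p=p^{n-1}q$ is exactly $x_{n-1-a_m}$. Hence $a\in A_j$ precisely when the single $p$-adic digit $x_\ell$, with $\ell=n-1-a_m$, equals $j$; I will use this digit description throughout.

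For the equal cardinalities, I would apply Lemma \ref{pqcycles} to the root $A(\om^{p^{a_m}q})=0$ (valid since $a_m\in\set{a:A(\om^{p^aq})=0}$). As $N/(p^{a_m}q)=p^{n-a_m}$ is a prime power, the $q$-term drops out and $A(X^{p^{a_m}q})\equiv P(X^{p^{a_m}q})\Phi_p(X^{N/p})\bmod X^N-1$. The factor $\Phi_p(X^{N/p})$ renders the coefficient sequence of the left side $N/p$-periodic, so the multiset $\set{p^{a_m}q\,a\bmod N:a\in A}$ is a union of $p$-cycles $\set{r,r+N/p,\dotsc,r+(p-1)N/p}$. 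Each such cycle meets each slab $[\tfrac Np j,\tfrac Np(j+1))$ in exactly one point, so $\#A_j=\#A/p$ for every $j$; in particular each $A_j$ is nonempty.

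For $A_j(\om^{p^{a_m}q})\neq0$ I would argue geometrically. For $a\in A_j$ the exponent $p^{a_m}q\,a\bmod N$ lies in $[\tfrac Np j,\tfrac Np(j+1))$, so each term $\om^{p^{a_m}q\,a}$ is a unit vector whose argument lies in the half-open arc $[\tfrac{2\pi}pj,\tfrac{2\pi}p(j+1))$ of length $2\pi/p\le\pi$. After rotating this arc to be centred at $0$, every term acquires nonnegative real part, strictly positive when $p\ge3$; hence the sum has positive real part and cannot vanish, the degenerate case $p=2$ being settled instead by the imaginary part (all terms then lie in a closed half-plane and are not all real of the same sign). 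Since $A_j\neq\varnothing$ by the previous step, $A_j(\om^{p^{a_m}q})\neq0$.

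The genuine obstacle is the inheritance of low roots. Write $d=p^cq^{\eps}$ with $c\le a_m-1$, so that $M:=N/d=p^{n-c}q^{1-\eps}$ has $p$-adic valuation $n-c\ge\ell+2$; the equation $A(\om^d)=0$ reads $\sum_{k\in\ZZ_M}\mu_k\zeta^k=0$ in $\ZZ[\zeta]$, where $\zeta=\om^d$ has order $M$ and $\mu_k=\#\set{a\in A:a\equiv k\bmod M}$. The slab index of $a$ depends only on $a\bmod p^{n-c}$, hence descends to a function $\mathrm{sl}\colon\ZZ_M\to\set{0,\dotsc,p-1}$ of the exponent, and $A_j(\om^d)=\sum_{\mathrm{sl}(k)=j}\mu_k\zeta^k$. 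The crucial feature is that $\ell$ sits \emph{strictly below} the top $p$-adic level $n-c-1$ of the $M$-th roots of unity. I would then fix a standard integral basis of $\ZZ[\zeta]=\ZZ[\zeta_{p^{n-c}}]\otimes\ZZ[\zeta_q]$ and observe that reducing an arbitrary power $\zeta^k$ to this basis uses only the cyclotomic relations $\sum_{i=0}^{p-1}\zeta^{r+ip^{n-c-1}q^{1-\eps}}=0$ and (when $\eps=0$) $\sum_{i=0}^{q-1}\zeta^{r+ip^{n-c}}=0$, the first of which alters only the top $p$-adic digit and the second only the $q$-component; in either case $\mathrm{sl}$ is constant on the support, so reduction never changes the value of $\mathrm{sl}$. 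Consequently the relation $\sum_k\mu_k\zeta^k=0$ splits along the fibres of $\mathrm{sl}$, forcing $A_j(\om^d)=0$ for each $j$. The fussiest step — the technical heart — is exactly this bookkeeping: checking that the integral basis can be chosen so that $\mathrm{sl}$ is well defined on it and untouched by reduction, the only delicate point being the harmless ``$q$-twist'' from the CRT identification of $a\bmod p^{n-c}$ with the exponent, which merely permutes top $p$-adic digits among themselves. Equivalently, one may phrase the same fact as the integral analogue of Theorem 3.3 of \cite{LL}: the relation lattice of the $M$-th roots of unity is generated by $p$- and $q$-cycles, each of which is supported inside a single fibre of $\mathrm{sl}$.
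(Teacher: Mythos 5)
Your proof is correct and follows essentially the same route as the paper's: the heart of both arguments is that each $p$- or $q$-cycle in the Lam--Leung decomposition of $d\cdot A$ lies inside a single slab, which you verify by tracking the $p$-adic digit at position $n-1-a_m$ (including the harmless $q$-twist), while the paper multiplies the cycle by $d'=p^{a_m}q/d$ and observes it collapses to a single point mod $N$. The remaining parts --- equal cardinalities via the $p$-cycle structure of $p^{a_m}q\cdot A$, and $A_j(\om^{p^{a_m}q})\neq0$ via a positivity/arc argument in place of the paper's degree bound on $P$ --- are minor variants of the paper's reasoning.
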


\begin{proof}
 By Proposition \ref{PQ} we have $A(X^{p^{a_m}q})\equiv P(X^{p^{a_m}q})\Phi_p(X^{N/p})\bmod X^N-1$, and without loss of generality we have $\deg P(X^{p^{a_m}q})< N/p$.
 The hypothesis implies that 
 \[A_j(X^{p^{a_m}q})\equiv P(X^{p^{a_m}q})X^{\frac{N}{p}j}\bmod X^N-1,\]
 so that $A_j(1)=P(1)$ for all $j$, proving the first part.
 
 Now, let $d=p^kq^c$ with $A(\om^d)=0$, where $k<a_m$ and $c=0$ or $1$. We will use the language of multi-sets for this part; $d\cdot A$ is a union of $p$- and $q$-cycles, as
 multi-sets. We will show that every such cycle must belong exclusively to one of the mutli-sets $d\cdot A_j$, hence each $d\cdot A_j$ is also a union of $p$- and
 $q$-cycles, leading to $A_j(\om^d)=0$, for all $j$.
 
 Indeed, suppose that $da$ is a part of such a cycle; the other member of said cycle are $da+lN/r$, where $r=p$ or $q$, and $0\leq l\leq r-1$. Define $d'$ by $dd'=p^{a_m}q$. Then,
 for every $l$,
 \[d'da\equiv d'(da+lN/r)\bmod N,\]
 since $d'l\equiv0\bmod r$. This is true in case where $r=p$, because $p\mid d'$. If $r=q$, we can have $q$-cycles only if $c=0$, or equivalently $q\mid d'$. The above congruence
 clearly shows that any such cycle belongs to one of the multi-sets $d\cdot A_j$ (we remark that these multi-sets are mutually exclusive).
 
 For the last part, we just note that $P(\om^{p^{a_m}q})\neq0$, otherwise $\Phi_p(X^{N/p})$ would be a factor of $P(X^{p^{a_m}q})$, an impossibility, since the degree of the latter does not
 exceed $N/p$.
\end{proof}

\begin{prop}\label{induction}
 Let $A\ssq\ZZ_N$ be spectral with $\set{a: A(\om^{p^am})=0}=\set{a_1,\dotsc,a_m}$ and $B$ a spectrum. Consider the partition of $A$ into $A_0,A_1,\dotsc,A_{p-1}$ as in Proposition
 \ref{partition}. Suppose that the maximal $a\notin\set{a_1,\dotsc,a_m}$  with $0\leq a\leq n$ and $(B-B)\cap p^a\ZZ_N^{\star}\neq\vn$ (if it exists!) satisfies $a<a_m$. Then,
 each $A_j$ is spectral (if such $a$ does not exist, we have the same conclusion).
\end{prop}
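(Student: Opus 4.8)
The plan is to construct an explicit spectrum for each $A_j$ as a suitable ``digit slice'' of the given spectrum $B$. I work in the main case $\#A=p^mq$, which by Proposition \ref{lower} is the only alternative to $\#A=p^m$; in the latter degenerate situation $A$ carries pure $p$-power structure and is handled as in the $N=p^n$ case. Assuming $\#A=p^mq$, Proposition \ref{main} applies and gives $\#B_i=p^m$ for every residue $i\bmod q$, with the elements of each $B_i$ realizing all $p^m$ possible $p$-adic digit patterns at the places $a_1,\dotsc,a_m$, each exactly once. Fixing a digit value $t\in\set{0,1,\dotsc,p-1}$, I set
\[B^{(t)}=\set{b\in B: b_{a_m}=t},\]
the elements of $B$ whose $a_m$-th $p$-adic digit equals $t$. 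Counting within each $B_i$ yields $\#B^{(t)}=p^{m-1}q$, which by Proposition \ref{partition} (all $A_j$ share the same cardinality) equals $\#A_j=\#A/p$ for every $j$. My claim is that this single set $B^{(t)}$ is simultaneously a spectrum of \emph{every} $A_j$; since the cardinalities already match, by the equivalence in Theorem \ref{diff} it suffices to verify
\[B^{(t)}-B^{(t)}\ssq\set{0}\cup\set{n:A_j(\om^n)=0}\]
for each $j$.

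To verify this I would take two distinct $b,b'\in B^{(t)}$ and analyse $b-b'$ by its divisor class. Since $B^{(t)}\ssq B$, Theorem \ref{diff} already gives $A(\om^{b-b'})=0$, so I only need to upgrade each zero of $A$ to a zero of $A_j$, which Proposition \ref{partition} does provided the relevant divisor divides $p^{a_m-1}q$. The decisive observation is that $b$ and $b'$ share the $a_m$-th digit $t$, so writing $p^c\pdiv(b-b')$ for the $p$-adic valuation, one has $c\neq a_m$. If $q\mid b-b'$, then $b-b'\in p^cq\ZZ_N^{\star}$ with $A(\om^{p^cq})=0$, forcing $c\in\set{a_1,\dotsc,a_m}$; as $c\neq a_m$ this gives $c\leq a_{m-1}<a_m$, whence $p^cq\mid p^{a_m-1}q$ and Proposition \ref{partition} yields $A_j(\om^{p^cq})=0$. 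If instead $q\nmid b-b'$, then $b-b'\in p^c\ZZ_N^{\star}$ with $A(\om^{p^c})=0$; when $c\in\set{a_1,\dotsc,a_m}$ the same valuation argument gives $c<a_m$, while when $c\notin\set{a_1,\dotsc,a_m}$ the standing hypothesis on the maximal pure $p$-power difference forces $c<a_m$ as well. In either case $p^c\mid p^{a_m-1}q$, so Proposition \ref{partition} yields $A_j(\om^{p^c})=0$.

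Assembling the cases shows $B^{(t)}-B^{(t)}\ssq\set{0}\cup\set{n:A_j(\om^n)=0}$, so the exponentials indexed by $B^{(t)}$ are orthogonal on $A_j$; as $\#B^{(t)}=\#A_j$ they constitute an orthogonal basis and $A_j$ is spectral. The case where no maximal $a$ exists is subsumed, since then the forbidden pure $p$-power differences simply do not occur and the second case above is vacuous. I expect the main obstacle to be precisely the pure $p$-power differences ($q\nmid b-b'$): ruling out valuation exactly $a_m$ rests on the shared digit $t$, whereas ruling out valuations exceeding $a_m$ is exactly where the hypothesis $a<a_m$ on the maximal such difference is indispensable. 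Without it a difference lying in $p^{a_m}\ZZ_N^{\star}$ or higher could survive inside $B^{(t)}-B^{(t)}$ and destroy orthogonality on $A_j$, since Proposition \ref{partition} only controls zeros of $A_j$ at divisors of $p^{a_m-1}q$.
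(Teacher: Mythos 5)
Your argument is essentially the paper's: you slice the spectrum $B$ by the value of the $p$-adic digit at place $a_m$, observe that a difference of two elements in the same slice cannot have $p$-adic valuation exactly $a_m$ (and, by the hypothesis on the maximal $a$, cannot have valuation exceeding $a_m$ either), and then transfer the resulting zeros of $A$ at divisors of $p^{a_m-1}q$ to zeros of each $A_j$ via Proposition \ref{partition}; this is precisely the chain of inclusions in the paper's proof, which you spell out in more detail. The one substantive difference is how the cardinality $\#B^{(t)}=\#A_j$ is obtained: the paper applies pigeonhole to the $p$ slices to find one of size at least $\tfrac1p\#B=A_j(1)$, after which the orthogonality forced by the containment gives equality, whereas you invoke Proposition \ref{main}(2) for an exact count, which restricts you to the case $\#A=p^mq$. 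This leaves the case $\#A=p^m$ (not excluded by the statement of the proposition) unproved --- ``handled as in the $N=p^n$ case'' is not an argument that each $A_j$ is spectral there --- but the gap is easily repaired: your containment $B^{(t)}-B^{(t)}\ssq\set{0}\cup\set{n:A_j(\om^n)=0}$ is independent of $\#A$, so replacing the exact count by the pigeonhole bound $\#B^{(t)}\geq\#B/p$ for some $t$ recovers the full statement exactly as in the paper, and also makes the appeal to Proposition \ref{main}(2) unnecessary.
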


\begin{proof}
 Let $B^i$ be the subset of elements $B$ whose $p$-adic digit at $a_m$ is equal to $i$. By hypothesis and Proposition \ref{partition},
 \[(B^i-B^i)\ssq(B-B)\sm (p^{a_m}\ZZ_N^{\star}\cup p^{a_m}q\ZZ_N^{\star})\ssq\set{d:A(\om^d)=0}\sm p^{a_m}\ZZ_N\ssq {d:A_j(\om^d)=0}.\]
 By pigeonhole principle, we may select one $B^i$ such that $\#B^i\geq \frac{1}{p}\#B=A_j(1)$. This can only be possible if we have equality, thus showing that each $A_j$ is spectral
 by Theorem \ref{diff}, having the same spectrum (actually, any $B^i$ would serve as such).
\end{proof}

If $A(1)=p^mq$ and $A(X)$ has at least $m$ roots of the form $\om^d$, where $d$ is a power of $p$, then $A$ has a special structure.

\begin{prop}\label{ManyRoots}
Let $A\ssq\ZZ_N$ with $A(1)=p^mq$. Suppose that $A(\om^{d_i})=0$, where $d_i$ are increasing powers of $p$, $1\leq i\leq m$ and $d_i\leq p^n$.
 Then either
 \[A(X^{d_j})\equiv P_{d_j}(X^{d_j})\Php\bmod X^N-1\]
 for all $j$, where $P_{d_j}\not\equiv0$ have nonnegative coefficients,
 or
 \[A(X^{d_m})\equiv 
 Q_{d_m}(X^{d_m})\Phq\bmod X^N-1,\]
 where $Q_{d_m}\not\equiv0$ has nonnegative coefficients.
\end{prop}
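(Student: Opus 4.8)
The plan is to analyse the multiset $d_j\cdot A$ level by level through the cycle decomposition of Lemma \ref{pqcycles}, and then to use the single size constraint $A(1)=p^mq$ to pin down the top level $d_m$. First I would record, for each $j$, the decomposition
\[A(X^{d_j})\equiv P_{d_j}(X^{d_j})\Php+Q_{d_j}(X^{d_j})\Phq\bmod X^N-1,\]
with $P_{d_j},Q_{d_j}$ of nonnegative coefficients, which Lemma \ref{pqcycles} supplies since each $d_j$ is a power of $p$ and $A(\om^{d_j})=0$. Evaluating at $X=1$ gives the mass equation $pP_{d_j}(1)+qQ_{d_j}(1)=A(1)=p^mq$; in particular $p\mid Q_{d_j}(1)$ and $0\le Q_{d_j}(1)\le p^m$, with $P_{d_j}\equiv0$ exactly when $Q_{d_j}(1)=p^m$.

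Next I would set up the propagation between levels. Since $d_{j+1}/d_j$ is a positive power of $p$, substituting $X\mapsto X^{d_{j+1}/d_j}$ in the level-$j$ identity sends $\Php$ to $p$ (because $X^{(d_{j+1}/d_j)N/p}\equiv1$) and fixes $\Phq$ (its terms are merely permuted, as $\gcd(p,q)=1$). Iterating up to the top yields, for every $j<m$,
\[A(X^{d_m})\equiv pP_{d_j}(X^{d_m})+Q_{d_j}(X^{d_m})\Phq\bmod X^N-1.\]
The dichotomy is now visible: if $Q_{d_j}\equiv0$ for all $j$, every level is a union of $p$-cycles with $P_{d_j}(1)=p^{m-1}q>0$, which is the first alternative; otherwise some $Q_{d_{j_0}}\not\equiv0$, and it remains to prove $P_{d_m}\equiv0$, i.e. $Q_{d_m}(1)=p^m$.

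For the base case $m=1$ this last step is immediate from Lemma \ref{combo}: the only nonnegative solutions of $pP_{d_1}(1)+qQ_{d_1}(1)=pq$ are $(q,0)$ and $(0,p)$, so the single level is either pure-$p$ or pure-$q$. For $m\ge2$ I would run the same mass bookkeeping one level at a time using the norm-growth mechanism of Claims \ref{firstbound} and \ref{secondbound}: the presence of $m$ successive $p$-power roots should force the largest coefficient $\normoo{A(X^{d_m})}$ up to the order of $p^{m-1}q$, and since a $p$-cycle spreads its multiplicity over $p$ positions while a $q$-cycle spreads it over $q$, the \emph{fixed} total mass $p^mq$ can be realised at the top only by a single pure structure. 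Combined with $p\mid Q_{d_m}(1)$, this would force $Q_{d_m}(1)=p^m$ as soon as any $q$-cycle is present below, giving the second alternative.

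I expect this last step to be the main obstacle, for a specific reason: the decomposition of Lemma \ref{pqcycles} is \emph{not} unique, so the pair $(P_{d_j},Q_{d_j})$ is not determined by $A$ and one cannot simply track $Q_{d_j}(1)$ as a well-defined, monotonically increasing quantity up the tower. To make the saturation argument rigorous I would replace $(P_{d_j},Q_{d_j})$ by decomposition-independent data — the reductions of $A(X^{d_m})$ modulo $p$ and modulo $q$, together with the factorisation $X^N-1\equiv(X-1)^{p^n}\Phi_q(X)^{p^n}\bmod p$ — and use these first to show that a $q$-cycle appearing at level $j_0$ persists to level $m$, and then that the exact mass $p^mq$ together with nonnegativity leaves no room for a surviving $p$-cycle at the top.
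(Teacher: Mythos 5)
Your setup coincides with the paper's opening moves: decompose each level via Lemma \ref{pqcycles}, propagate upward by the substitution $X\mapsto X^{d_{j+1}/d_j}$ (which indeed sends $\Php$ to $p$ and fixes $\Phq$ modulo $X^N-1$), and try to finish with the mass identity $A(1)=p^mq$ and Lemma \ref{combo}. You also correctly locate the obstacle: for $m\ge 2$ the single-level equation $pP_{d_j}(1)+qQ_{d_j}(1)=p^mq$ has many nonnegative solutions, the pair $(P_{d_j},Q_{d_j})$ is not canonical, and pushing the level-$j$ identity to the top only ever gives $A(X^{d_m})\equiv pP_{d_j}(X^{d_m})+Q_{d_j}(X^{d_m})\Phq$ --- the coefficient of the $P$-part stays $p$ no matter how far you substitute, so no all-or-nothing conclusion follows. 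However, you do not close this gap, and neither of your two proposed mechanisms works as described. The norm-growth of Claims \ref{firstbound} and \ref{secondbound} is tied to roots of the form $\om^{p^{a_j}q}$, where $N/(p^{a_j}q)$ is a prime power and Lemma \ref{pqcycles} forces a pure $p$-cycle decomposition; at roots that are pure powers of $p$ both cycle types occur, and in the second alternative $A(X^{d_m})\equiv Q_{d_m}(X^{d_m})\Phq$ with $Q_{d_m}(1)=p^m$ the mass can be spread over $p^mq$ distinct monomials, so $\normoo{A(X^{d_m})}$ can be as small as $1$ and certainly need not reach $p^{m-1}q$. The fallback via reductions modulo $p$ and $q$ is only announced, not argued.

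The missing idea --- which is the actual content of the paper's proof --- is to amplify the coefficient of the $P$-part from $p$ to $p^m$ by \emph{re-decomposing at every level}: after substituting into the level-$j$ identity one has $A(X^{d_{j+1}})\equiv pP_{d_j}(X^{d_{j+1}})+Q_{d_j}(X^{d_{j+1}})\Phq$, and since $A(\om^{d_{j+1}})=0$ and $\Phq$ vanishes at $\om$, the polynomial $P_{d_j}(X^{d_{j+1}})$ is itself a vanishing sum of roots of unity with nonnegative coefficients, hence admits its own decomposition by Lemma \ref{pqcycles}. Iterating yields $A(X^{d_m})\equiv p^{m-1}P_{d_1}^{m-1}(X^{d_m})\Php+\bigl(\sum_{i=0}^{m-1}p^iQ_{d_1}^i(X^{d_m})\bigr)\Phq$, and setting $X=1$ gives $p^mq=p^m P_{d_1}^{m-1}(1)+q\sum_i p^iQ_{d_1}^i(1)$, to which Lemma \ref{combo} applies in its ``exactly two solutions'' case $k=p^m\cdot q$, forcing either $(q,0)$ (all $Q$'s vanish, first alternative) or $(0,p^m)$ (the top-level $P$ vanishes, second alternative). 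Without this recursive step your argument proves the proposition only for $m=1$.
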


\begin{proof}
 By Lemma \ref{pqcycles} we obtain
 \[A(X^{d_j})\equiv P_{d_j}(X^{d_j})\Php+Q_{d_j}(X^{d_j})\Phq\bmod X^N-1,\]
 while on the other hand we can show inductively
 \[A(X^{d_{j+1}})\equiv p^jP_{d_1}^j(X^{d_{j+1}})\Php+\sum_{i=0}^j p^iQ_{d_1}^i(X^{d_{j+1}})\Phq\bmod X^N-1,\]
 for all $j$, where all the polynomials appearing in these two formulae have nonnegative coefficients, and satisfy the reccurence relations 
 \[P_{d_1}^j(X^{d_{j+2}})\equiv P_{d_1}^{j+1}(X^{d_{j+2}})\Php+Q_{d_1}^{j+1}(X^{d_{j+2}})\Phq\bmod X^N-1.\]
 Without loss of generality we can write
 \[P_{d_j}(X^{d_j})\equiv p^{j-1}P_{d_1}^{j-1}(X^{d_j}), Q_{d_j}(X^{d_j})\equiv \sum_{i=0}^{j-1} p^iQ_{d_1}^i(X^{d_j}),\]
 for all $j$. Putting $j=m-1$ and $X=1$ we get
 \[p^mq=A(1)=p^mP_{d_1}^{m-1}(1)+q\sum_{i=0}^{m-1} p^iQ_{d_1}^i(1),\]
 so by Lemma \ref{combo} we have either
 \begin{equation}\label{one}
 P_{d_1}^{m-1}(1)=q\	\text{ and }\	\sum_{i=0}^{m-1} p^iQ_{d_1}^i(1)=0, 
 \end{equation} 
 or 
 \begin{equation}\label{two}
 P_{d_1}^{m-1}(1)=0\	\text{ and }\	\sum_{i=0}^{m-1} p^iQ_{d_1}^i(1)=p^m. 
 \end{equation} 
 If \eqref{one} holds, then we have $Q_{d_1}^i\equiv0$ for all $i$, hence $Q_{d_j}\equiv0$ for all $j$, and $A(X^{d_j})\equiv P_{d_j}(X^{d_j})\Php$. Otherwise, if \eqref{two} holds, 
 then $P_{d_1}^{m-1}\equiv0$, so in this case we get $A(X^{d_m})\equiv Q_{d_m}(X^{d_m})\Phq$, as desired.
\end{proof}

Now, we can proceed with the conclusion of the Spectral$\Rightarrow$Tile proof. If $p^n\in D$, {\bf (T2)} holds vacuously, so we need only prove {\bf (T1)}, namely $A(1)=p^m$.
Suppose on the contrary that $A(1)>p^m$, hence by Proposition \ref{main}(1) we have $A(1)=p^mq$. If a spectrum $B$ satisfies the hypothesis of Proposition \ref{induction}, then
each $A_j$ is spectral, so by induction they satisfy {\bf (T1)}. Since $A_j(1)=p^{m-1}q$, we must have $A_j(\om^{p^n})=0$ for all $j$, yielding $A(\om^{p^n})=0$, contradicting
our assumption that $p^n\in D$. If the maximal $a$ such that $(B-B)\cap p^a\ZZ_N^{\star}\neq\vn$ satisfies $a>a_m$, then by Proposition \ref{main}(5) we get that
$A(\om^{p^{a_i}})=0$ for all $1\leq i\leq m$. Taking $\set{d_1,\dotsc,d_m}=\set{p^{a_1},\dotsc,p^{a_{m-1}},p^a}$ and applying Proposition \ref{ManyRoots} we get either
\[A(X^{d_m})\equiv P_{d_m}(X^{d_m})\Php\bmod X^N-1\]
or
\[A(X^{d_m})\equiv 
 Q_{d_m}(X^{d_m})\Phq\bmod X^N-1,\]
 where in each case $P_{d_m},Q_{d_m}\not\equiv0$ have nonnegative coefficients. However, this contradicts Proposition \ref{PQ}, since $A(\om^{p^n})A(\om^{p^aq})\neq0$. We conclude that
 $A$ must satisfy {\bf (T1)} as well, thus tiling $\ZZ_N$ by translations due to Theorem \ref{t1t2}.
 
 Lastly, we suppose that $p^n\notin D$, so that $A(\om^{p^n})=0$. In this case, $q\mid A(1)$ by Proposition \ref{lower}, hence $A(1)>p^m$ and $A(1)=p^mq$ by Proposition \ref{main}(1).
 Therefore, {\bf (T1)} holds and it remains to prove {\bf (T2)}, namely $A(\om^{p^{a_i}})=0$ for all $i$. If the maximal $a$ for which $(B-B)\cap p^a\ZZ_N^{\star}\neq\vn$ holds,
 satisfies $a>a_m$, for a
 spectrum $B$, then by applying Proposition \ref{main}(5) we deduce that {\bf (T2)} holds. Otherwise, $B$ satisfies the conditions of Proposition \ref{induction}. Therefore, each
 $A_j$ is spectral and satisfies {\bf (T2)}, yielding $A_j(\om^{p^{a_i}})=0$ for all $j$ and $1\leq i\leq m-1$, hence $A(\om^{p^{a_i}})=0$ for $1\leq i\leq m-1$. It remains to show
 that $A(\om^{p^{a_m}})=0$. We remark that an $a$ as described in Proposition \ref{induction} \emph{actually exists} in this case due to Proposition \ref{main}(4), and $a<a_m$.
 We apply Proposition \ref{ManyRoots} for $\set{d_1,\dotsc,d_m}=\set{p^{a_1},\dotsc,p^{a_{m-1}},p^a}$; if
 \[A(X^{d_j})\equiv P_{d_j}(X^{d_j})\Php\bmod X^N-1\]
 for all $j$, where $P_{d_j}\not\equiv0$ have nonnegative coefficients, then $A(\om^{p^aq})=0$, a contradiction. Therefore,
 \[A(X^{d_m})\equiv 
 Q_{d_m}(X^{d_m})\Phq\bmod X^N-1,\]
 where $Q_{d_m}\not\equiv0$ has nonnegative coefficients. Substituting $X$ by $X^{p^{a_m}/d_m}$, we get 
 \[A(X^{p^{a_m}})\equiv Q_{d_m}(X^{p^{a_m}})\Phq,\]
 yielding $A(\om^{p^{a_m}})=0$, completing the proof.






\section*{Acknowledgments} 
The authors are grateful to the anonymous reviewer for their valuable remarks.


\bibliographystyle{amsplain}


\begin{dajauthors}
\begin{authorinfo}[RDM]
  Romanos Diogenes Malikiosis\\
  Technische Universit\"at Berlin, Institut f\"ur Mathematik\\
  Sekretariat MA 4-1\\
  Stra{\ss}e des 17. Juni 136\\
  D-10623 Berlin, Germany\\
  malikios\imageat{}math\imagedot{}tu-berlin\imagedot{}de \\
  \url{https://sites.google.com/site/romanosdiogenesmalikiosis/}
\end{authorinfo}
\begin{authorinfo}[MNK]
  Mihail N. Kolountzakis\\
  Department of Mathematics and Applied Mathematics, University of Crete\\
  Voutes Campus\\
  700 13 Heraklion, Greece\\
  kolount\imageat{}gmail\imagedot{}com \\
  \url{http://mk.eigen-space.org/}
\end{authorinfo}
\end{dajauthors}

\end{document}